\newtheorem{Theorem}{Theorem}[section]
\newtheorem{Lemma}[Theorem]{Lemma}
\newtheorem{Corollary}[Theorem]{Corollary}
\newtheorem{Open Problem}[Theorem]{Open Problem}
\newcommand{\mysection}[1]{\section{#1}\setcounter{equation}{0}}
\begin{document}

\title{ Existence of ground state solution of Nehari-Poho\v{z}aev type for 
a quasilinear Schr\"{o}dinger system }

\author{Jianqing Chen\ and Qian Zhang\thanks{Corresponding author:\ qzhang\_fjnu@163.com  (Q. Zhang) }\\
\small  \  College of Mathematics and Informatics \& FJKLMAA, Fujian Normal University, \\
\small  Qishan Campus, Fuzhou 350117, P. R. China\\
\small  jqchen@fjnu.edu.cn (J. Chen) \ \   qzhang\_fjnu@163.com  (Q. Zhang) }

\date{}

\maketitle
\noindent {\bf Abstract}:  This paper is concerned with the following quasilinear Schr\"{o}dinger system in the entire space $\mathbb R^{N}$($N\geq3$):
$$\left\{\aligned &-\Delta u+A(x)u-\frac{1}{2}\triangle(u^{2})u=\frac{2\alpha}{\alpha+\beta}|u|^{\alpha-2}u|v|^{\beta},\\
&-\Delta v+Bv-\frac{1}{2}\triangle(v^{2})v=\frac{2\beta}{\alpha+\beta}|u|^{\alpha}|v|^{\beta-2}v.\endaligned\right. $$
By establishing a suitable constraint set and studying related minimization problem, we prove the existence of ground state solution for $\alpha,\beta>1$, $2<\alpha+\beta<\frac{4N}{N-2}$. Our results can be looked on as a generalization to results by Guo and Tang (Ground state solutions for quasilinear Schr\"{o}dinger systems, J. Math. Anal. Appl. 389 (2012) 322).

 \medskip

\noindent {\bf Keywords:} Quasilinear Schr\"{o}dinger system; Ground state solution; Poho\v{z}aev identity

\medskip
\noindent {\bf Mathematics Subject Classification}:  35J20,  35J60
\mysection {Introduction}
Let $N\geq 3$, $u:=u(x),v:=v(x)$ be real valued functions on $\mathbb R^{N}$. In this paper, we consider the following  quasilinear Schr\"{o}dinger system:
\begin{equation}\label{eq11}
\left\{\aligned &-\Delta u+A(x)u-\frac{1}{2}\triangle(u^{2})u=\frac{2\alpha}{\alpha+\beta}|u|^{\alpha-2}u|v|^{\beta},\\
&-\Delta v+Bv-\frac{1}{2}\triangle(v^{2})v=\frac{2\beta}{\alpha+\beta}|u|^{\alpha}|v|^{\beta-2}v,\endaligned\right.
\end{equation}
where $u := u(x) \to 0$ and $v := v(x)\to 0$ as $|x| \to \infty$;  $\alpha,\beta>1$, $2<\alpha+\beta< \frac{4N}{N-2}$, $B>0$ is a constant. In recent years, much attention has been devoted to the quasilinear Schr\"{o}dinger equation of the form
\begin{equation}\label{eq12}
-\Delta u+V(x)u-ku\Delta(u^{2})=|u|^{p-2}u,\ x\in \mathbb R^{N}.
\end{equation}
The solutions of (\ref{eq12}) are related to the existence of standing waves of the following quasilinear equation
\begin{equation}\label{eq13}
i\partial_{t}z=-\Delta z+V(x)z-l(|z|^{2})z-k\Delta g(|z|^{2})g'(|z|^{2})z,\ x\in \mathbb R^{N},
\end{equation}
where $V$ is a given potential, $k\in \mathbb R$, $l$ and $g$ are real functions. For more detailed mathematical and physical background of the problem (\ref{eq12}),  we refer to \cite{bg,bhs,bmmlb,1bl,bl,cs,0s} and the references therein.
\vskip4pt
There has been increasing attention to problem  (\ref{eq12}). In \cite{lw,lw2}, with the help of a constrained minimization argument, the existence of positive ground state solution   was proved. Furthermore, Liu and Wang \cite{lw2}  proved that equation (\ref{eq12}) has a ground state solution by using a change of variables and treating the new problem in an Orlicz space when $4\leq p< \frac{4N}{N-2}$ and the potential $V(x)\in C(\mathbb R^{N}, \mathbb R)$ satisfies
$$(V)\inf\limits_{x\in\mathbb R^{N}}V(x)\geq a>0,\ \forall\ M>0,\hbox{meas}\{x\in \mathbb R^{N}\ | \ V(x)\leq M\}<+\infty.$$
Such kind of hypotheses was introduced to guarantee the compactness of embedding  of
$E:=\left\{u\in H^{1}(\mathbb R^{N})\  | \ \int_{\mathbb R^{N}}V(x)u^{2}<\infty\right\}\hookrightarrow L^{s}(\mathbb R^{N})$
for $ 2\leq s< \frac{2N}{N-2}$.   In \cite{lww}, Liu et al. established the existence of  ground state solutions for (\ref{eq12}) by the Nehari method.  Very recently, under a Nehari-Poho\v{z}aev type constraint and concentration-compactness lemma, Ruiz and Siciliano \cite{rs} showed equation (\ref{eq12}) has a ground state solution for $N\geq3,$ $2<p<\frac{4N}{N-2}$.  Moreover, the potential $V (x)$ satisfies the following conditions

$(V_{1})\ V\in C^{1}(\mathbb R^{N},\mathbb R^{+}), 0<V_{0}\leq V(x)\leq V_{\infty}=\lim\limits_{|x|\rightarrow\infty}V(x)<+\infty;$

$(V_{2})\ \nabla V(x)\cdot x \in L^{\infty}(\mathbb R^{N});\ \ \ \ \ \  \ \ \ \ \ \ \ \ \  \ \ \ \ \ \ \ \ \ \  \ \ \ \ \ \ \ \ $

$(V_{3})$ the map $t\mapsto t^{\frac{N+2}{N+p}}V(t^{\frac{1}{N+p}}x)$ is concave for any $x\in \mathbb R^{N}.\ \ \  \ \ \ \  $

\noindent Wu and Wu \cite{ww} obtained the existence of radial solutions for (\ref{eq12}) by using change of variables when $2<p< \frac{4N}{N-2}$ and the potential $V (x)$ satisfies the similar assumptions as those in Ruiz and Siciliano \cite{rs}. For Schr\"{o}dinger system with quasilinear term, few results is known.
Guo and Tang \cite{gt} studied the following system
$$\left\{\begin{array}{ll}
\aligned &-\Delta u+(\lambda a(x)+1)u-\frac{1}{2}\triangle(u^{2})u=\frac{2\alpha}{\alpha+\beta}|u|^{\alpha-2}u|v|^{\beta},\\
&-\Delta v+(\mu b(x)+1)v-\frac{1}{2}\triangle(v^{2})v=\frac{2\beta}{\alpha+\beta}|u|^{\alpha}|v|^{\beta-2}v,\endaligned
\end{array}\right.$$
and proved the existence of positive ground state solution for $4\leq\alpha+\beta< \frac{4N}{N-2}$.  Moreover, the potential $a(x),b(x)$ satisfy

$(a_{3})\ 0\leq a(x), b(x)\in C(\mathbb R^{N},\mathbb R)$, $\Omega:=\hbox{int}\{a^{-1}(0)\}=\hbox{int}\{b^{-1}(0)\}$ is nonempty with smooth boundary and $\bar{ \Omega}= a^{-1}(0)=b^{-1}(0);$

$(a_{4})\ \hbox{there exist}\ M_{1},M_{2}>0$ such that $meas(\{x\in\mathbb R^{N}\ | \ a(x)\leq M_{1}\})<\infty$ and $meas(\{x\in \mathbb R^{N}\ | \ b(x)\leq M_{2}\})<\infty.$

\noindent Later, a class of gradient system with the form
$$\left\{\begin{array}{ll}
\aligned &-\Delta u+ V_1(x) u- \triangle(u^{2})u= \frac{\partial}{\partial u} F(x,u,v),\\
&-\Delta v+ V_2(x) v- \triangle(v^{2})v=  \frac{\partial}{\partial v} F(x,u,v)\endaligned
\end{array}\right.$$
has been studied by  Severe and Silva \cite{ss}.  In which, besides some other conditions on $V_1(x)$,  $V_2(x)$ and $F(x,u,v)$, the authors obtain the existence of nontrivial solution for the case of $F(x,u,v) = \frac2{\alpha +\beta} |u|^\alpha|v|^\beta$ with $\alpha >2$, $\beta >2$ and $\alpha +\beta < \frac{4N}{N-2}$.   More works on single quasilinear equation or quasilinear system can be found in \cite{acm,jw, m,s,su,ss}.  Note that, in \cite{gt,lw}, by using change of variables, equation is transformed into a related semilinear problem, then one can apply the well-known arguments of \cite{0bl}.

\vskip4pt
To the best of our knowledge, there is no results on the existence of positive ground state solutions for (\ref{eq11}) with $2< \alpha + \beta < \frac{4N}{N-2}$. The main purpose of the present paper is to give a condition to ensure the existence of ground state solution to (\ref{eq11}) for all $\alpha > 1$, $\beta >1$ and $\alpha + \beta < \frac{4N}{N-2}$.
Before state our main results, we make the following assumptions

$(A_{1})$ $A\in C^{1}(\mathbb R^{N},\mathbb R^{+}),$ $0<A_{0}\leq A(x)\leq A_{\infty}=\lim\limits_{|x|\rightarrow\infty}A(x)<+\infty$;

$(A_{2})$ $ \nabla A(x)\cdot x\in L^{\infty}(\mathbb R^{N});$

$(A_{3})$ the map $s\mapsto s^{\frac{N+2}{N+\alpha+\beta}}A(s^{\frac{1}{N+\alpha+\beta}}x)$ is concave for any $x\in \mathbb R^{N}$.

\noindent It is worth noting that the similar hypotheses on $A(x) $ as above $(A_{1})$-$(A_{3})$ once appeared in \cite{rs,ww} to study  the single quasilinear Schr\"{o}dinger equation, where the condition $(A_{1})$ is used to derive the existence of a strongly convergent subsequence. While for the system,  we only need condition like $(A_{1})$ in one equation and the coefficient $B$ being a constant. This seems to be a different phenomenon due to the coupling of $u$ and $v$.
\vskip4pt

 Our main result  reads as follows.

\begin{Theorem}\label{th11}
Under the assumptions  $(A_{1})$-$(A_{3})$. If $ \alpha,\beta>1,\alpha+\beta\in(2, \frac{4N}{N-2})$, then problem (\ref{eq11}) has a positive ground state solution.
\end{Theorem}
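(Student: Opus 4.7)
The plan is to adapt the Nehari--Poho\v{z}aev method of Ruiz and Siciliano \cite{rs} to the coupled setting. First, introduce the energy functional
$$I(u,v) = \frac{1}{2}\int_{\R^N}\!\bigl((1+u^2)|\nabla u|^2 + A(x)u^2\bigr) + \frac{1}{2}\int_{\R^N}\!\bigl((1+v^2)|\nabla v|^2 + Bv^2\bigr) - \frac{2}{\alpha+\beta}\int_{\R^N}\!|u|^{\alpha}|v|^{\beta}$$
on a subspace $X \subset H^1(\R^N)\times H^1(\R^N)$ where the quasilinear terms are finite. Then combine the formal Nehari identity (obtained by testing the two equations with $(u,v)$) with the Poho\v{z}aev identity (obtained by testing with $(x\cdot\nabla u,\, x\cdot\nabla v)$) into a single scalar functional $J(u,v)$, constructed so that $J(u,v)$ coincides with $\frac{d}{dt}\big|_{t=1} I(u_t, v_t)$ along a one-parameter family $(u_t, v_t)$ whose scaling exponents are precisely tailored to hypothesis $(A_3)$. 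The Nehari--Poho\v{z}aev constraint set is then $\mathcal{M} = \{(u,v) \in X \setminus \{(0,0)\} : J(u,v) = 0\}$.

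Next I would analyse the fiber map $t \mapsto I(u_t, v_t)$ and exploit $(A_3)$ to show that it attains a unique strict global maximum on $(0,\infty)$. This yields a well-defined projection of every nonzero pair onto $\mathcal{M}$, shows $\mathcal{M}$ is a $C^1$-manifold, and guarantees that every nontrivial weak solution of (\ref{eq11}) belongs to $\mathcal{M}$. Sobolev inequalities then give $m := \inf_{\mathcal{M}} I > 0$. In parallel, the problem at infinity, obtained by freezing $A \equiv A_\infty$, has its own constraint set $\mathcal{M}^\infty$ with infimum $m^\infty$; the strict concavity in $(A_3)$, together with $A(x) \leq A_\infty$, produces the comparison $m < m^\infty$, which will be indispensable for the compactness step.

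For a minimizing sequence $(u_n, v_n) \subset \mathcal{M}$, boundedness in $X$ follows from $J(u_n,v_n) = 0$ together with $I(u_n,v_n)\to m$, where $(A_2)$ is used to control the term $\int(\nabla A\cdot x)u_n^2$ that appears in the Poho\v{z}aev identity. After extracting a weak limit $(u_0,v_0)$, I would apply a Lions-type concentration--compactness argument to the densities $u_n^2 + v_n^2$: the vanishing alternative is excluded because it forces $\int |u_n|^{\alpha}|v_n|^{\beta}\to 0$ and hence $I(u_n,v_n)\to 0$, while the alternative of mass escaping to infinity is ruled out by $m < m^\infty$. Thus $(u_0,v_0)\neq(0,0)$; the projection argument combined with weak lower semicontinuity (with Fatou handling the quasilinear terms) then forces $(u_0,v_0)\in\mathcal{M}$ and $I(u_0,v_0) = m$. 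A Lagrange multiplier computation on the natural constraint $\mathcal{M}$ shows the multiplier vanishes, so $(u_0,v_0)$ is a weak solution of (\ref{eq11}); replacing it by $(|u_0|,|v_0|)$ and applying the strong maximum principle to each equation delivers positivity. The main obstacle I anticipate is establishing $m < m^\infty$ under $(A_3)$ and handling the loss of weak continuity of the quasilinear integrals, both of which must now accommodate the coupling between $u$ and $v$ that is absent in the scalar theory of \cite{rs}.
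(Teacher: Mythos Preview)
Your overall architecture matches the paper's: the same functional $I$, the same Nehari--Poho\v{z}aev constraint built from the scaling $u_t(x)=tu(t^{-1}x)$, the same use of $(A_3)$ to obtain a unique maximum of $t\mapsto I(u_t,v_t)$, and the same positivity argument via $(|u_0|,|v_0|)$ and the maximum principle. But two steps in your outline diverge from the paper in ways that matter.

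\medskip
\textbf{The comparison $m<m^\infty$ is not available.} You plan to rule out mass escaping to infinity by proving $m<m^\infty$, invoking ``strict concavity in $(A_3)$'' and $A(x)\le A_\infty$. Neither ingredient gives a strict inequality: $(A_3)$ only asserts concavity, and $(A_1)$ allows $A\equiv A_\infty$; indeed Corollary~\ref{co12} explicitly covers the constant-potential case, where $m=m^\infty$. The paper does \emph{not} exclude the unbounded-translation alternative at all. Instead, after obtaining strong $L^2$-convergence of the translated sequence $\omega_n=u_n(\cdot+x_n)\to\omega$, it treats the cases ``$\{x_n\}$ bounded'' and ``$\{x_n\}$ unbounded'' separately. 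In the unbounded case one has $\int A(tx)u_n^2=\int A(t(x+x_n))\omega_n^2\to A_\infty\int\omega^2\ge\int A(tx)\omega^2$, so weak lower semicontinuity still yields $m\ge I(\omega_t,\nu_t)$ for every $t>0$; projecting $(\omega,\nu)$ onto $\mathcal{M}$ then produces a minimizer. The upshot is that the compactness step works with only $m\le m^\infty$, but it requires first establishing strong $L^2$-convergence of the translates, which the paper obtains through a rather delicate cut-off splitting (its Steps~2--3) rather than the bare Lions dichotomy you describe.

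\medskip
\textbf{The Lagrange-multiplier step is not straightforward here.} You propose to pass from ``constrained minimizer'' to ``free critical point'' by showing the multiplier vanishes on the $C^1$-manifold $\mathcal{M}$. The difficulty is that $H$ is not a vector space (the quasilinear term $\int u^2|\nabla u|^2$ is non-convex) and $I$ is only Gateaux differentiable along $C_0^\infty$ directions, so the standard constrained-critical-point machinery does not apply directly. The paper circumvents this with a deformation argument: assuming the minimizer $(\tilde u,\tilde v)$ is not a weak solution, one finds $(\phi_1,\phi_2)\in C_0^\infty\times C_0^\infty$ with $\langle I'(\tilde u,\tilde v),(\phi_1,\phi_2)\rangle<-1$, builds a perturbed path $t\mapsto(\tilde u_t+\varepsilon\zeta(t)\phi_1,\tilde v_t+\varepsilon\zeta(t)\phi_2)$, and uses the sign change of $G$ along this path to locate a point of $\mathcal{M}$ with energy strictly below $m$. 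If you want to keep a multiplier-style argument you will need to justify carefully the differentiable structure you are using.
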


\begin{Corollary}\label{co12}
If $A(x)$ is a positive constant,  one can still obtain the same results as  Theorem \ref{th11} for system (\ref{eq11}).
\end{Corollary}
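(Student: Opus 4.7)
The plan is to re-run the variational scheme that proves Theorem \ref{th11}, but to replace the ingredient that requires $A\not\equiv\text{const}$ (namely the strict comparison between the ground state level $c$ and the level $c_\infty$ of the ``problem at infinity'') by the translation invariance that is available precisely when $A$ is constant. Note that when $A(x)\equiv A_0>0$, assumptions $(A_1)$--$(A_3)$ are trivially satisfied with $A_0=A_\infty$, so the entire set-up of Theorem \ref{th11} (the energy functional $I$, the Nehari--Poho\v{z}aev constraint manifold $\mathcal M$, the positivity of $c:=\inf_{\mathcal M}I$, and the boundedness in $H^1\times H^1$ of minimizing sequences on $\mathcal M$) carries over verbatim. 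What fails is only the strict inequality $c<c_\infty$, and one must provide an alternative way of extracting a nontrivial weak limit.

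The replacement argument is standard concentration--compactness. Take a minimizing sequence $(u_n,v_n)\subset\mathcal M$ with $I(u_n,v_n)\to c$; boundedness gives us control on $\|u_n\|_{H^1}$, $\|v_n\|_{H^1}$ and on the quasilinear quadratic quantities $\|\nabla(u_n^2)\|_{L^2}$, $\|\nabla(v_n^2)\|_{L^2}$. First I would rule out vanishing: if
\[
\sup_{y\in\R^N}\int_{B(y,1)}\bigl(|u_n|^2+|v_n|^2\bigr)\,dx\longrightarrow 0,
\]
Lions' lemma combined with the $H^1$-boundedness of $u_n^2$ and $v_n^2$ forces $u_n,v_n\to 0$ in $L^s(\R^N)$ for every $s\in(2,2^\ast)$, which makes the coupling term $\int|u_n|^\alpha|v_n|^\beta\,dx$ tend to $0$; together with the Nehari--Poho\v{z}aev identity defining $\mathcal M$ this would drag $I(u_n,v_n)\to 0$, contradicting $c>0$. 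Hence there exist $\delta>0$ and translations $y_n\in\R^N$ for which $\int_{B(y_n,1)}(|u_n|^2+|v_n|^2)\,dx\geq\delta$. Because $A$ is constant, $I$, the equation and the constraint $\mathcal M$ are invariant under translation, so $\tilde u_n:=u_n(\cdot+y_n)$, $\tilde v_n:=v_n(\cdot+y_n)$ is again a minimizing sequence on $\mathcal M$, and the lower bound on the local mass guarantees a weak limit $(u,v)\neq(0,0)$ along a subsequence.

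The remaining task is to promote weak convergence to a minimizer. Using the weak lower semicontinuity of $\int|\nabla u|^2$, of the quasilinear term $\int|\nabla u|^2u^2$ (which is convex in $u$ after the standard change of variable used in the quasilinear Schr\"odinger literature), and of $\int A_0 u^2$, plus local compactness of the embedding $H^1_{\mathrm{loc}}\hookrightarrow L^p_{\mathrm{loc}}$ for the coupling term $\int|u|^\alpha|v|^\beta$, one verifies the Nehari--Poho\v{z}aev constraint in the limit: either $(u,v)\in\mathcal M$ directly, or one applies the same rescaling $t\mapsto(u(\cdot/t),v(\cdot/t))$ used in the proof of Theorem \ref{th11} to project onto $\mathcal M$ without increasing $I$, reproducing the argument that pairs weak limits with the monotonicity of the fibering map on $\mathcal M$. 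The lower semicontinuity then yields $I(u,v)\leq c$, and the reverse inequality is automatic; hence $(u,v)$ achieves $c$. Positivity of the resulting ground state is obtained by the same replacement $(|u|,|v|)$ argument and maximum principle used in Theorem \ref{th11}.

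The main obstacle, as flagged above, is that the compactness route taken in Theorem \ref{th11} relies crucially on $c<c_\infty$, and this gap vanishes when $A$ is constant. The substitute ingredient is the autonomous nature of the system, which provides both translation invariance (to salvage a nontrivial weak limit) and the equality between the ground state level and the level of the translated problem (so that the weak limit itself is the minimizer, with no energy loss at infinity). Everything else---boundedness, the Poho\v{z}aev identity, the scaling on $\mathcal M$, and the passage from minimizer on $\mathcal M$ to solution of (\ref{eq11})---is inherited from the proof of Theorem \ref{th11}.
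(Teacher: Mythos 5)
Your opening observation is already the entire proof: a positive constant $A$ satisfies $(A_{1})$ with $A_{0}=A_{\infty}=A$, satisfies $(A_{2})$ since $\nabla A(x)\cdot x\equiv 0$, and satisfies $(A_{3})$ because the map $s\mapsto A\,s^{\frac{N+2}{N+\alpha+\beta}}$ is concave (its exponent lies in $(0,1)$ precisely because $\alpha+\beta>2$). Hence Theorem \ref{th11} applies verbatim and the corollary follows with no further work; this is evidently what the paper intends, since it states the corollary without a separate proof. The remainder of your argument rests on a mischaracterization of how Theorem \ref{th11} is proved in this paper: no strict inequality $c<c_{\infty}$ against a problem at infinity is ever invoked. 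Lemma \ref{le32} already runs a concentration--compactness/splitting argument and, in its Step 4, treats the case of an unbounded translation sequence $\{x_{n}\}$ directly, using only $A(x)\le A_{\infty}$ to show that the translated weak limit, after rescaling onto $\mathcal{M}$, still minimizes $I|_{\mathcal{M}}$. So there is no gap to repair when $A$ is constant, and your substitute scheme (ruling out vanishing, translating by $y_{n}$, extracting a nontrivial weak limit, projecting onto $\mathcal{M}$) is essentially a redundant re-derivation of what Lemma \ref{le32} already contains. It is not wrong in outline, but it buys nothing here, and presenting it as necessary obscures the fact that the corollary is a literal special case of the theorem.
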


\noindent {\bf Remark 1.3.}  As a main novelty with respect to some results in \cite{gt,lw,lww}, we are able to deal with exponents $\alpha+\beta\in(2, \frac{4N}{N-2}).$
\vskip4pt

 \vskip4pt
The rest of the paper is organized as follows. In Section 2, we state the variational framework of our problem and establish some preliminary results. Theorem \ref{th11} is proved in Section 3.

\mysection {Preliminaries}
Let $H^{1}(\mathbb R^{N})$ be the usual Sobolev space. Define $X :=\ H\times H$
with  $H := \{ u \in H^{1}(\mathbb R^{N})\   | \ \int_{\mathbb R^{N}}u^{2}|\nabla u|^{2}<+\infty  \}.$
The term $\int_{\mathbb R^{N}}u^{2}|\nabla u|^{2}$ is not convex and $H$ is not even a vector space. So, the usual min-max techniques cannot be directly applied, nevertheless $H$ is a complete metric space with distance
$$ d_{H}( u, \omega  )=\|u-\omega\|_{H^{1}}+|\nabla u^{2}-\nabla \omega^{2}|_{L^{2}}.  $$
We define $$\aligned d_{X}((u,v),(\omega,\nu))
&=\|u-\omega\|_{H^{1}}+|\nabla u^{2}-\nabla \omega^{2}|_{L^{2}}+\|v-\nu\|_{H^{1}}\\
&\ \ \ +|\nabla v^{2}-\nabla \nu^{2}|_{L^{2}}. \endaligned$$
Then we call $(u,v)\in X$ is a weak solution of (\ref{eq11}) if for any $\varphi_{1},\varphi_{2}\in C_{0}^{\infty}(\mathbb R^{N})$,
$$\aligned &\int_{\mathbb R^{N}}\bigg((1+u^{2})\nabla u\nabla \varphi_{1}+u|\nabla u|^{2}\varphi_{1}+A(x)u\varphi_{1}\\
&\ \ \ \ -\frac{2\alpha}{\alpha+\beta}|u|^{\alpha-2}u|v|^{\beta}\varphi_{1}\bigg)=0,\endaligned$$
and
$$\aligned &\int_{\mathbb R^{N}}\bigg((1+v^{2})\nabla v\nabla \varphi_{2}+v|\nabla v|^{2}\varphi_{2}+B v\varphi_{2}\\
&\ \ \ \  -\frac{2\beta}{\alpha+\beta}|u|^{\alpha}|v|^{\beta-2}v\varphi_{2}\bigg)=0.\endaligned$$
Hence there is a one-to-one correspondence between solutions of (\ref{eq11}) and critical points of the following  functional $I: X \rightarrow\mathbb R$ defined by
\begin{equation}\label{eq21}
\aligned I(u,v)&=\int_{\mathbb R^{N}}\bigg(\frac{1}{2}(|\nabla u|^{2}+|\nabla v|^{2}+A(x)u^{2}+B v^{2}\\
&\ \ \ \ +u^{2}|\nabla u|^{2}+v^{2}|\nabla v|^{2})-\frac{2}{\alpha+\beta} |u|^{\alpha}|v|^{\beta}\bigg).\endaligned
\end{equation}
For any $\varphi_{1},\varphi_{2}\in C_{0}^{\infty}(\mathbb R^{N})$, $(u,v)\in X,$ and $(u,v)+(\varphi_{1},\varphi_{2})\in X$, we compute the Gateaux derivative
$$\aligned &\ \langle I'(u,v),(\varphi_{1},\varphi_{2})\rangle\\
=&\ \int_{\mathbb R^{N}}\bigg((1+u^{2})\nabla u\nabla \varphi_{1}+(1+v^{2})\nabla v\nabla \varphi_{2}+u|\nabla u|^{2}\varphi_{1}\\
&\ +v|\nabla v|^{2}\varphi_{2}+A(x)u\varphi_{1}+B v\varphi_{2}-\frac{2\alpha}{\alpha+\beta}|u|^{\alpha-2}u|v|^{\beta}\varphi_{1}\\
&\  -\frac{2\beta}{\alpha+\beta}|v|^{\beta-2}v|u|^{\alpha}\varphi_{2}\bigg).\endaligned$$
Then, $(u,v)\in X $ is a solution of (\ref{eq11}) if and only if
$$\langle I'(u,v),(\varphi_{1},\varphi_{2})\rangle=0.$$

For convenience, throughout this paper,  $C, C_{i}(i=1,2,\ldots)$ denotes (possibly different) positive constants, $\int_{\mathbb R^{N}}g$ denotes the integral $\int_{\mathbb R^{N}}g(z)dz$ and $\|(u,v)\|:=\int_{\mathbb R^{N}}(|\nabla u|^{2}+|\nabla v|^{2}+u^{2}+v^{2})$.

\begin{Lemma}\label{le21} (Poho\v{z}aev identity) Under the assumptions $(A_{1})$-$(A_{3})$,  if $(u,v)\in X$ is a weak solution of problem (\ref{eq11}), then $(u,v)$ satisfies $P(u,v)=0,$ where
$$ \aligned P(u,v):&=\frac{N-2}{2}\int_{\mathbb R^{N}}|\nabla u|^{2}+\frac{N-2}{2}\int_{\mathbb R^{N}}|\nabla v|^{2}+\frac{N}{2}\int_{\mathbb R^{N}}A(x)u^{2}\\
&\ \ \   +\frac{N}{2}\int_{\mathbb R^{N}}B v^{2}+\frac{1}{2}\int_{\mathbb R^{N}}\nabla A(x)\cdot xu^{2}+\frac{N-2}{2}\int_{\mathbb R^{N}}u^{2}|\nabla u|^{2}\\
&\ \ \ +\frac{N-2}{2}\int_{\mathbb R^{N}}v^{2}|\nabla v|^{2}-\frac{2N}{\alpha+\beta}\int_{\mathbb R^{N}}|u|^{\alpha}|v|^{\beta}.\endaligned $$
\end{Lemma}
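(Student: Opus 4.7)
The plan is to derive the identity by testing the first equation with the Pohozaev multiplier $x\cdot\nabla u$ and the second with $x\cdot\nabla v$, then summing. Since $x\cdot\nabla u$ is not a priori an admissible test function (it is neither in $C_0^\infty$ nor guaranteed to lie in the right space without further regularity), I would first establish by standard elliptic regularity theory applied to each quasilinear equation that a weak solution $(u,v)\in X$ is in fact in $C^2_{\mathrm{loc}}\cap W^{2,2}_{\mathrm{loc}}$ with adequate decay, then justify the computation through a truncation: replace $x\cdot\nabla u$ by $\eta_R(x)\,x\cdot\nabla u$ with $\eta_R(x)=\eta(x/R)$ a standard cutoff, and pass to the limit $R\to\infty$, using that $(u,v)\in X$, $A(x)$ is bounded, and $\nabla A(x)\cdot x\in L^\infty$ by $(A_2)$, so that every integrand has the required integrability.

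The key computations, carried out via integration by parts, are the standard ones for each term. For the Laplacian, $\int(-\Delta u)(x\cdot\nabla u)=\tfrac{2-N}{2}\int|\nabla u|^2$. For the linear potential, $\int A(x)u(x\cdot\nabla u)=-\tfrac{N}{2}\int A(x)u^2-\tfrac12\int(\nabla A(x)\cdot x)u^2$, and similarly $\int Bv(x\cdot\nabla v)=-\tfrac{N}{2}\int Bv^2$. For the quasilinear piece, using the weak-form identity $\int(-\tfrac12\Delta(u^2))u\,\varphi=\int u^2\nabla u\cdot\nabla\varphi+\int u|\nabla u|^2\varphi$ with $\varphi=x\cdot\nabla u$ and the pointwise identity $\nabla u\cdot\nabla(x\cdot\nabla u)=|\nabla u|^2+\tfrac12 x\cdot\nabla|\nabla u|^2$, one obtains, after another integration by parts and cancellation of the term $\int u|\nabla u|^2(x\cdot\nabla u)$, the clean contribution $\tfrac{2-N}{2}\int u^2|\nabla u|^2$, and symmetrically $\tfrac{2-N}{2}\int v^2|\nabla v|^2$.

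The coupling is handled by combining the two test identities before integrating by parts on the nonlinearity. The cross terms produced by testing are
\[
-\frac{2\alpha}{\alpha+\beta}\int|u|^{\alpha-2}u|v|^\beta(x\cdot\nabla u)-\frac{2\beta}{\alpha+\beta}\int|u|^\alpha|v|^{\beta-2}v(x\cdot\nabla v),
\]
and the product rule gives $\alpha|u|^{\alpha-2}u|v|^\beta\nabla u+\beta|u|^\alpha|v|^{\beta-2}v\nabla v=\nabla(|u|^\alpha|v|^\beta)$, so this sum equals $-\tfrac{2}{\alpha+\beta}\int x\cdot\nabla(|u|^\alpha|v|^\beta)=\tfrac{2N}{\alpha+\beta}\int|u|^\alpha|v|^\beta$. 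Summing the two Pohozaev identities $P_1=0$ and $P_2=0$ and multiplying by $-1$ reproduces exactly the expression $P(u,v)=0$ stated in the lemma.

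The main technical obstacle is not the algebra but the justification of plugging in $x\cdot\nabla u$ and $x\cdot\nabla v$ as test functions, which is delicate precisely because the quasilinear terms require control of $\int u^2|\nabla u|^2$ and of second derivatives near infinity. The cutoff argument succeeds because the boundary terms arising from derivatives of $\eta_R$ are controlled by $\int_{R\le|x|\le 2R}(|\nabla u|^2+u^2|\nabla u|^2+u^2)\to 0$ as $R\to\infty$, using $(u,v)\in X$ together with the $L^\infty$ bound on $\nabla A\cdot x$ from $(A_2)$ to handle the potential contribution. Assumption $(A_1)$ then guarantees that each of the remaining integrals is absolutely convergent, completing the derivation.
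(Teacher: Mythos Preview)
Your proposal is correct and is precisely the standard derivation the paper alludes to: the paper's own proof reads in full ``The proof is standard, and we omit the details.'' Your computations for each term (Laplacian, potential, quasilinear piece, and coupling) are accurate, and the cutoff justification via $\eta_R$ together with $(A_1)$--$(A_2)$ and $(u,v)\in X$ is exactly the standard way to make the formal testing by $x\cdot\nabla u$, $x\cdot\nabla v$ rigorous.
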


\begin{proof} The proof is standard, and we omit the details.
\end{proof}

For any $u\in H$, we define $u_{t}:\mathbb R^{+}\rightarrow H$ by
$$ u_{t}(x):=tu(t^{-1}x).$$
Let $t\in \mathbb R^{+}$ and $(u,v)\in X$, we have
$$\aligned I(u_{t},v_{t})&=\frac{t^{N}}{2}\int_{\mathbb R^{N}}|\nabla u|^{2}+\frac{t^{N}}{2}\int_{\mathbb R^{N}}|\nabla v|^{2}+\frac{t^{N+2}}{2}\int_{\mathbb R^{N}}A(tx)u^{2}\\
&\ \ \ \ +\frac{t^{N+2}}{2}\int_{\mathbb R^{N}}B v^{2}+\frac{t^{N+2}}{2}\int_{\mathbb R^{N}}u^{2}|\nabla u|^{2}\\
&\ \ \ \ +\frac{t^{N+2}}{2}\int_{\mathbb R^{N}}v^{2}|\nabla v|^{2}-\frac{2t^{N+\alpha+\beta}}{\alpha+\beta}\int_{\mathbb R^{N}}|u|^{\alpha}|v|^{\beta}.\endaligned$$
Denote $h_{uv}(t):=I(u_{t},v_{t})$. Since $\alpha+\beta>2,$ we see that $h_{uv}(t)>0$ for $t>0$ small enough and $h_{uv}(t)\rightarrow -\infty$ as $t\rightarrow +\infty,$ this implies that $h_{uv}(t)$ attains its maximum.

Moreover, by $(A_{2}),$  $h_{uv}(t): \mathbb R^{+}\rightarrow \mathbb R$ is $C^{1}$ and
$$\aligned h'_{uv}(t)
&=\frac{N}{2}t^{N-1}\int_{\mathbb R^{N}}(|\nabla u|^{2}+|\nabla v|^{2})+\frac{N+2}{2}t^{N+1}\int_{\mathbb R^{N}}(A(tx)u^{2}\\
&\ \ \ \ +B v^{2}+u^{2}|\nabla u|^{2}+v^{2}|\nabla v|^{2}) +\frac{ t^{N+1}}{2}\int_{\mathbb R^{N}} \nabla A(tx)\cdot tx u^{2} \\
&\ \ \ \ -\frac{2(N+\alpha+\beta)}{\alpha+\beta}t^{N+\alpha+\beta-1}\int_{\mathbb R^{N}}|u|^{\alpha}|v|^{\beta}.\endaligned $$

Motivated by \cite{rs}, we introduce the following set
$$\mathcal{M}=\{(u,v)\in X\backslash\{(0,0)\}\ | \ G(u,v)=0\},$$
where $G:X\rightarrow \mathbb R$ and
$$ \aligned G(u,v)&=\frac{N}{2} \int_{\mathbb R^{N}}(|\nabla u|^{2}+|\nabla v|^{2})+\frac{N+2}{2} \int_{\mathbb R^{N}}(A(x)u^{2}+B v^{2})\\
&\ \ \ \ +\frac{N+2}{2} \int_{\mathbb R^{N}}(u^{2}|\nabla u|^{2}+v^{2}|\nabla v|^{2})+\frac{1}{2}\int_{\mathbb R^{N}} \nabla A(x)\cdot x u^{2}\\
&\ \ \ \ -\frac{2(N+\alpha+\beta)}{\alpha+\beta}\int_{\mathbb R^{N}}|u|^{\alpha}|v|^{\beta}. \endaligned $$
Observe that if $(u,v)\in X$ is a weak solution of problem (\ref{eq11}), then $\langle I'(u,v),(u,v)\rangle=0$ and $P(u,v)=0$, therefore, $G(u,v)=0.$

Define
\begin{equation}\label{eq22}
m=\inf\limits_{(u,v)\in \mathcal{M}}I(u,v).
\end{equation}
Then our aim is to prove that $m$ is achieved, i.e., there exists $(u,v)\in \mathcal{M}$, such that $I(u,v)=m.$ In the rest of this section, we will give some properties of the set $\mathcal{M}$ and show that (\ref{eq22}) is well defined.
\begin{Lemma}\label{le22} For any $(u,v)\in X,u\neq0$ and $v\neq0,$ there is a unique $\bar{ t}:=t(u,v)>0$ such that $h_{uv}$ attains its maximum at $\bar{t}$. Moreover, $h_{uv}$ is positive and increasing for $t\in [0,\bar{t}]$ and decreasing for $t>\bar{t}$. Moreover, for any $u,v\neq0,$
$$m=\inf_{(u,v)\in X}\max_{t>0}I(u_{t},v_{t}). $$
\end{Lemma}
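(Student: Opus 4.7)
The plan is to study the scalar function $h_{uv}(t)=I(u_t,v_t)$ on $(0,\infty)$ and to derive both the unique-maximum statement and the min-max characterization from a single strict concavity argument after a suitable change of variable.

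First, I would record the boundary behavior already displayed in the text: the term $\tfrac{t^N}{2}\int(|\nabla u|^2+|\nabla v|^2)$ dominates near $t=0$ so $h_{uv}(t)>0$ for small $t>0$, while the negative $t^{N+\alpha+\beta}$ term dominates at infinity so $h_{uv}(t)\to -\infty$, using $\alpha+\beta>2$. Hypothesis $(A_2)$ justifies differentiating under the integral in the $A(tx)$ factor, so $h_{uv}$ is $C^1$, and together with the endpoint behavior this gives existence of some $\bar t>0$ with $h'_{uv}(\bar t)=0$ and $h_{uv}(\bar t)=\max_{t>0}h_{uv}(t)$.

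The crux is uniqueness. The idea is to change variable by $s=t^{N+\alpha+\beta}$ and consider $\widetilde h(s):=h_{uv}(s^{1/(N+\alpha+\beta)})$. After the substitution, the $|u|^\alpha|v|^\beta$ integral becomes linear in $s$, and all remaining terms carry either the exponent $N/(N+\alpha+\beta)$ or $(N+2)/(N+\alpha+\beta)$, both of which lie strictly in $(0,1)$ precisely because $\alpha+\beta>2$. Hence the pure power pieces (the kinetic, the $Bv^2$, and the two quasilinear quartic terms) are strictly concave in $s$. The term containing $A$ reads $\tfrac12\int s^{(N+2)/(N+\alpha+\beta)}A(s^{1/(N+\alpha+\beta)}x)u^2$, which by $(A_3)$ is concave in $s$ for each fixed $x$ and hence concave after integration against $u^2\,dx$. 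Subtracting a linear function preserves strict concavity, so $\widetilde h$ is strictly concave on $(0,\infty)$. A strictly concave function that vanishes at $0^+$, is positive somewhere, and tends to $-\infty$ admits a unique critical point, which is its global maximum, and it is increasing to the left and decreasing to the right of that point. Pulling back through the monotone map $t=s^{1/(N+\alpha+\beta)}$ produces the unique $\bar t$ and the monotonicity claim for $h_{uv}$.

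For the min-max identity, I would observe that $G(u,v)$ is exactly $h'_{uv}(1)$. Setting $w=u_{\bar t}$, $z=v_{\bar t}$, the rescaling law $(w_s,z_s)=(u_{s\bar t},v_{s\bar t})$ yields $I(w_s,z_s)=h_{uv}(s\bar t)$, whence $G(w,z)=\bar t\,h'_{uv}(\bar t)=0$ and therefore $(u_{\bar t},v_{\bar t})\in\mathcal M$. This gives $\max_{t>0}I(u_t,v_t)=I(u_{\bar t},v_{\bar t})\geq m$, i.e.\ the direction $\geq$ of the min-max formula. Conversely, if $(u,v)\in\mathcal M$ then $h'_{uv}(1)=G(u,v)=0$ forces $\bar t(u,v)=1$ by the uniqueness just proved, so $\max_{t>0}I(u_t,v_t)=I(u,v)$, which gives $\inf_{(u,v)\in X}\max_{t>0}I(u_t,v_t)\leq m$. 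The main subtlety I anticipate is the strict concavity of $\widetilde h$: one must confirm that $(A_3)$ really delivers concavity of the $A$-term in the scalar $s$ for each $x$, and that concavity survives integration against $u^2\,dx$; the other power terms depend on keeping the strict inequality $\alpha+\beta>2$ throughout, which is exactly the lower assumption in the theorem.
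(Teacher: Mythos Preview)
Your proposal is correct and follows essentially the same route as the paper: the change of variable $s=t^{N+\alpha+\beta}$ turning $h_{uv}$ into a concave function of $s$ via $(A_3)$, from which uniqueness of $\bar t$ and the monotonicity follow, and then the min--max identity via the observation that $G(u,v)=h'_{uv}(1)$. If anything, you are more explicit than the paper about why the concavity is \emph{strict} (coming from the pure power terms with exponents in $(0,1)$) and about both inequalities in the min--max characterization, points the paper leaves largely implicit.
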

\begin{proof}
For any $t>0$, set $s=t^{N+\alpha+\beta}$, we obtain
$$\aligned h_{uv}(s)&=\frac{s^{\frac{N}{N+\alpha+\beta}}}{2}\int_{\mathbb R^{N}}|\nabla u|^{2}+\frac{s^{\frac{N}{N+\alpha+\beta}}}{2}\int_{\mathbb R^{N}}|\nabla v|^{2}+\frac{s^{\frac{N+2}{N+\alpha+\beta}}}{2}\int_{\mathbb R^{N}}u^{2}|\nabla u|^{2}\\
&\ \ \ \ +\frac{s^{\frac{N+2}{N+\alpha+\beta}}}{2}\int_{\mathbb R^{N}}v^{2}|\nabla v|^{2}+\frac{s^{\frac{N+2}{N+\alpha+\beta}}}{2}\int_{\mathbb R^{N}}A(s^{\frac{1}{N+\alpha+\beta}}x)u^{2}\\
&\ \ \ \ +\frac{s^{\frac{N+2}{N+\alpha+\beta}}}{2}\int_{\mathbb R^{N}}Bv^{2}-\frac{2s }{\alpha+\beta}\int_{\mathbb R^{N}}|u|^{\alpha}|v|^{\beta}.\endaligned $$
This is a concave function under the condition $(A_{3})$. We already know that it attains its maximum, let $\bar{t}$ be the unique point at which this maximum is achieved. Hence, $\bar{t}$ is the unique critical point of $h_{uv}$ and $h_{uv}$ is positive and increasing for $0<t <\bar{t}$ and decreasing for $t>\bar{t}$. In particular, for any $u,v\neq0$, $\bar{t}\in \mathbb R$ is the unique value such that $(u_{\bar{t}},u_{\bar{t}})\in\mathcal{M},$ and $I(u_{\bar{t}},v_{\bar{t}})$ reaches a global maximum for $t=\bar{t}$. This finishes the proof.
\end{proof}
It follows from Lemma \ref{le22}  that $\mathcal{M}\neq\emptyset $ and (\ref{eq22}) is well defined.
\vskip4pt
\begin{Lemma}\label{le23}
 $m>0$.
\end{Lemma}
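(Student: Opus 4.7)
The plan is to prove $m>0$ in two phases: first show $I(u,v)\geq 0$ on $\mathcal{M}$ using a Pohozaev-type identity, and then rule out $I(u_n,v_n)\to 0$ along a minimizing sequence by combining the constraint with Sobolev-type inequalities.

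For non-negativity I would use the algebraic identity $I(u,v)=I(u,v)-\frac{1}{N+\alpha+\beta}G(u,v)$, valid for $(u,v)\in\mathcal{M}$. After collecting coefficients the nonlinear term cancels and one is left with
\[
I(u,v)=\frac{\alpha+\beta}{2(N+\alpha+\beta)}\int_{\mathbb R^N}(|\nabla u|^2+|\nabla v|^2) +\frac{\alpha+\beta-2}{2(N+\alpha+\beta)}\int_{\mathbb R^N}(Au^2+Bv^2+u^2|\nabla u|^2+v^2|\nabla v|^2) -\frac{1}{2(N+\alpha+\beta)}\int_{\mathbb R^N}\nabla A\cdot x\,u^2.
\]
The essential use of $(A_3)$ is that the concave map $f(s)=s^{(N+2)/(N+\alpha+\beta)}A(s^{1/(N+\alpha+\beta)}x)$ vanishes at $s=0$, whence both $sf'(s)\leq f(s)$ (i.e.\ $\nabla A(x)\cdot x\leq(\alpha+\beta-2)A(x)$ at $s=1$) and $f'(s)\geq 0$ (i.e.\ $\nabla A(x)\cdot x\geq-(N+2)A(x)$) hold pointwise. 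The first absorbs the sign-indefinite last term into the $Au^2$ integral and yields
\[I(u,v)\geq \frac{\alpha+\beta}{2(N+\alpha+\beta)}\int_{\mathbb R^N}(|\nabla u|^2+|\nabla v|^2) +\frac{\alpha+\beta-2}{2(N+\alpha+\beta)}\int_{\mathbb R^N}(Bv^2+u^2|\nabla u|^2+v^2|\nabla v|^2)\geq 0,\]
so $m\geq 0$.

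To exclude $m=0$, argue by contradiction: assume $(u_n,v_n)\in\mathcal{M}$ with $I(u_n,v_n)\to 0$. The display above forces $\int|\nabla u_n|^2,\int|\nabla v_n|^2,\int v_n^2,\int u_n^2|\nabla u_n|^2,\int v_n^2|\nabla v_n|^2\to 0$, which by Sobolev $H^1\hookrightarrow L^{2^*}$ and $\int u^2|\nabla u|^2=\tfrac14\int|\nabla(u^2)|^2$ yields $u_n\to 0$ in $L^{2^*}\cap L^{4N/(N-2)}$ and $v_n\to 0$ in $L^q$ for every $q\in[2,4N/(N-2)]$. The second one-sided bound $\nabla A\cdot x\geq-(N+2)A$ applied in $G(u_n,v_n)=0$ cancels the $Au^2$ term on the right-hand side and produces $\int|u_n|^\alpha|v_n|^\beta\geq c\,\|(u_n,v_n)\|_*^2$, where $\|(u,v)\|_*^2=\int(|\nabla u|^2+|\nabla v|^2+v^2+u^2|\nabla u|^2+v^2|\nabla v|^2)$; combined with a Hölder--Gagliardo--Nirenberg upper bound $\int|u|^\alpha|v|^\beta\leq C\,\|(u,v)\|_*^{\alpha+\beta}$, this forces $\|(u_n,v_n)\|_*^{\alpha+\beta-2}\geq c/C>0$ (using $\alpha+\beta>2$), contradicting $\|(u_n,v_n)\|_*\to 0$.

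The main obstacle is the Sobolev--Hölder upper bound in the last step: when $\alpha,\beta$ are both near $1$ the purely quasilinear $L^{4N/(N-2)}$-estimate for $u$ does not suffice, and one must also recover $L^2$-control via $\int Au_n^2\leq C\int|u_n|^\alpha|v_n|^\beta$ (obtained by using $\nabla A\cdot x\leq(\alpha+\beta-2)A$ in $G=0$) before interpolating; with this blending, Hölder exponents $a,b\in[2,4N/(N-2)]$ satisfying $\alpha/a+\beta/b=1$ exist precisely because $2<\alpha+\beta<4N/(N-2)$.
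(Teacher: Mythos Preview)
Your overall strategy---working directly on $\mathcal{M}$ and using the pointwise consequences of $(A_3)$ to handle the $\nabla A\cdot x$ term---is different from the paper's, which first passes to the constant-potential functional $\bar I$ (with $A\equiv A_0$) and its constraint set $\bar{\mathcal M}$, thereby eliminating the $\nabla A\cdot x$ term altogether before running the Sobolev argument.

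Your argument has a genuine gap in the final paragraph. The claim that $\int A u_n^2\le C\int|u_n|^\alpha|v_n|^\beta$ follows from ``$\nabla A\cdot x\le(\alpha+\beta-2)A$ in $G=0$'' is wrong in the stated direction: that inequality bounds $\int\nabla A\cdot x\,u^2$ from \emph{above}, and since in $G=0$ both $A$-terms sit on the same side, you only get
\[
\frac{2(N+\alpha+\beta)}{\alpha+\beta}\,D_n \;\le\; \frac{N}{2}\!\int(|\nabla u_n|^2+|\nabla v_n|^2)+\frac{N+2}{2}\!\int(Bv_n^2+u_n^2|\nabla u_n|^2+v_n^2|\nabla v_n|^2)+\frac{N+\alpha+\beta}{2}\!\int A u_n^2,
\]
i.e.\ a \emph{lower} bound on $\int Au_n^2$ in terms of $D_n$ after discarding nonnegative pieces, not the upper bound you need. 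The companion inequality $\nabla A\cdot x\ge -(N+2)A$ only collapses the $A$-terms to $\ge 0$ and again yields no upper control. Without $L^2$-control of $u$, the H\"older step genuinely fails in the regime you flagged: for $N=3$ and $\alpha=\beta$ close to $1$, the maximum of $\alpha/a+\beta/b$ over $a\in[2^*,2\cdot2^*]$, $b\in[2,2\cdot2^*]$ is $\alpha/2^*+\beta/2<1$, so no admissible exponents exist. The bound you want \emph{is} available, but from the definition of $I$: once $I_n\to 0$ and $S_n\to 0$, the identity $\tfrac12\int Au_n^2=I_n-\tfrac12(S_n\text{-terms})+\tfrac{2}{\alpha+\beta}D_n$ gives $\int Au_n^2\le C(I_n+D_n)$; even so, the resulting self-referential estimate $D_n\le C(D_n+o(1))^{\gamma}S_n^{\delta}$ with $\gamma<1$ requires a bootstrap you have not carried out. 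The paper sidesteps all of this: on $\bar{\mathcal M}$ the constraint directly gives $\int(A_0u^2+Bv^2+u^2|\nabla u|^2+v^2|\nabla v|^2)\le C\int|u|^\alpha|v|^\beta$, and a Young--interpolation--Sobolev chain then shows $\int(u^2|\nabla u|^2+v^2|\nabla v|^2)$ is bounded away from zero on $\bar{\mathcal M}$, whence $\bar m>0$ and $m\ge\bar m>0$.
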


\begin{proof}  Let us define
$$\aligned \bar{I}(u,v)&=\int_{\mathbb R^{N}}\bigg(\frac{1}{2}(|\nabla u|^{2}+|\nabla v|^{2}+ A_{0}u^{2}+Bv^{2}\\
&\ \ \ \ +u^{2}|\nabla u|^{2}+v^{2}|\nabla v|^{2}) -\frac{2}{\alpha+\beta} |u|^{\alpha}|v|^{\beta}\bigg),\endaligned$$
where $A_{0}$ comes from $(A_{1})$. Obviously, $\bar{I}(u,v)\leq I(u,v)$, and this implies that for any $u,v\neq0$,
$$\bar{m}:=\inf_{(u,v)\in X}\max_{t>0}\bar{I}(u_{t},v_{t})\leq \inf_{(u,v)\in X}\max_{t>0}I(u_{t},v_{t})=m.$$
Then, it suffices to show that $\bar{m}>0.$ Define
$$ \bar{\mathcal{M}}=\{(u,v)\in X\backslash\{(0,0)\} \ | \ g'_{uv}(1)=0\},$$
where $g_{uv}(t)=\bar{I}(u_{t},v_{t}).$ By Lemma \ref{le22} applied to $A\equiv A_{0}$, we get that
$$\bar{m }=\inf_{(u,v)\in\bar{\mathcal{M}}}\bar{I}(u,v).$$
For every $(u,v)\in\bar{\mathcal{ M}}$, by using H\"{o}lder, Young and interpolation inequalities,
$$\aligned  &\ \frac{N+2}{2}\int_{\mathbb R^{N}}(A_{0}u^{2}+Bv^{2})+\frac{N+2}{2}\int_{\mathbb R^{N}}(u^{2}|\nabla u|^{2}+v^{2}|\nabla v|^{2})\\
\leq&\ \frac{2(N+\alpha+\beta)}{\alpha+\beta}\int_{\mathbb R^{N}}|u|^{\alpha}|v|^{\beta}\\
\leq&\ \frac{2(N+\alpha+\beta)}{\alpha+\beta}\bigg(\int_{\mathbb R^{N}}|u |^{\alpha+\beta}\bigg)^{\frac{\alpha}{\alpha+\beta}}\bigg(\int_{\mathbb R^{N}}|v |^{\alpha+\beta}\bigg)^{\frac{\beta}{\alpha+\beta}}\\
\leq&\ \frac{2\alpha(N+\alpha+\beta)}{(\alpha+\beta)^{2}} \int_{\mathbb R^{N}}|u |^{\alpha+\beta} +\frac{2\beta(N+\alpha+\beta)}{(\alpha+\beta)^{2}} \int_{\mathbb R^{N}}|v |^{\alpha+\beta} \\
\leq&\ \frac{2\alpha(N+\alpha+\beta)}{(\alpha+\beta)^{2}}\bigg(\int_{\mathbb R^{N}}u^{2}\bigg)^{\frac{l(\alpha+\beta)}{2}}\bigg(\int_{\mathbb R^{N}}(|u|^{\frac{4N}{N-2}}\bigg)^{\frac{(1-l)(N-2)(\alpha+\beta)}{4N}}\\
&\  +\frac{2\beta(N+\alpha+\beta)}{(\alpha+\beta)^{2}}\bigg(\int_{\mathbb R^{N}}v^{2}\bigg)^{\frac{l(\alpha+\beta)}{2}}\bigg(\int_{\mathbb R^{N}}(|v|^{\frac{4N}{N-2}}\bigg)^{\frac{(1-l)(N-2)(\alpha+\beta)}{4N}}\\
\leq&\ \frac{N+2}{2}\int_{\mathbb R^{N}}(A_{0}u^{2}+Bv^{2})+C\int_{\mathbb R^{N}}(|u|^{\frac{4N}{N-2}}+|v|^{\frac{4N}{N-2}}).\endaligned$$
 So, by the Sobolev inequality,
$$\aligned \frac{N+2}{2}\int_{\mathbb R^{N}}(u^{2}|\nabla u|^{2}+v^{2}|\nabla v|^{2})&\leq C\int_{\mathbb R^{N}}(|u|^{\frac{4N}{N-2}}+|v|^{\frac{4N}{N-2}})\\
&\leq C_{1}\bigg(\int_{\mathbb R^{N}}u^{2}|\nabla u|^{2}+v^{2}|\nabla v|^{2}\bigg)^{\frac{N}{N-2}},\endaligned $$
which implies that $\int_{\mathbb R^{N}}(u^{2}|\nabla u|^{2}+v^{2}|\nabla v|^{2})$ is bounded away from zero on $\bar{\mathcal{ M}}$, then
$$\aligned\bar{I}(u,v)&= \frac{\alpha+\beta}{2(N+\alpha+\beta)}\int_{\mathbb R^{N}}|\nabla u|^{2}+\frac{\alpha+\beta}{2(N+\alpha+\beta)}\int_{\mathbb R^{N}}|\nabla v|^{2}\\
&\ \ \ \ +\frac{A_{0}(\alpha+\beta-2)}{2(N+\alpha+\beta)}\int_{\mathbb R^{N}}u^{2}+\frac{B(\alpha+\beta-2)}{2(N+\alpha+\beta)}\int_{\mathbb R^{N}}v^{2}\\
&\ \ \ \ +\frac{\alpha+\beta-2}{N+\alpha+\beta}\int_{\mathbb R^{N}}u^{2}|\nabla u|^{2}+\frac{\alpha+\beta-2}{N+\alpha+\beta}\int_{\mathbb R^{N}}v^{2}|\nabla v|^{2}\\
&>0.\endaligned $$
\end{proof}

\begin{Lemma}\label{le24}
For any $(u,v)\in  \mathcal{M},$ there exists a constant $c>0$ such that
$$  I(u,v)\geq c\int_{\mathbb R^{N}}(u^{2}+ v^{2}+|\nabla u|^{2}+|\nabla v|^{2}+ u^{2}|\nabla u|^{2}+v^{2}|\nabla v|^{2}).$$
\end{Lemma}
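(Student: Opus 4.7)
The plan is to exploit the constraint $G(u,v)=0$ to eliminate the superquadratic integral from $I(u,v)$, in the same spirit as the computation carried out at the end of the proof of Lemma \ref{le23} for $\bar I$. I would solve $G(u,v)=0$ for $\frac{2}{\alpha+\beta}\int|u|^\alpha|v|^\beta$ and substitute into (\ref{eq21}). Collecting coefficients yields
\[
I(u,v)=\frac{\alpha+\beta}{2(N+\alpha+\beta)}\int_{\mathbb R^N}(|\nabla u|^2+|\nabla v|^2)+\frac{\alpha+\beta-2}{2(N+\alpha+\beta)}\int_{\mathbb R^N}\bigl(A(x)u^2+Bv^2+u^2|\nabla u|^2+v^2|\nabla v|^2\bigr)-\frac{1}{2(N+\alpha+\beta)}\int_{\mathbb R^N}\nabla A(x)\cdot x\, u^2.
\]

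Since $\alpha+\beta>2$, three of the four coefficients above are already strictly positive universal constants depending only on $N,\alpha,\beta,B$, so together they deliver the desired multiple of $\int(|\nabla u|^2+|\nabla v|^2+v^2+u^2|\nabla u|^2+v^2|\nabla v|^2)$. The only genuine issue is to show that
\[
\frac{\alpha+\beta-2}{2(N+\alpha+\beta)}\int_{\mathbb R^N} A(x)u^2-\frac{1}{2(N+\alpha+\beta)}\int_{\mathbb R^N}\nabla A(x)\cdot x\, u^2\ \ge\ c\int_{\mathbb R^N}u^2
\]
for a universal $c>0$. Applying $(A_3)$ to $f(s)=s^{(N+2)/(N+\alpha+\beta)}A(s^{1/(N+\alpha+\beta)}x)$, and noting $f(0)=0$, the concavity inequality $f(1)\ge f'(1)$ rewrites as the pointwise bound $(\alpha+\beta-2)A(x)\ge\nabla A(x)\cdot x$, making the bracket $(\alpha+\beta-2)A(x)-\nabla A(x)\cdot x$ nonnegative. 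To upgrade this to a genuine multiple of $A_0 u^2$, I would split $A(x)u^2$ as a convex combination: one share would be used together with the $L^\infty$ control from $(A_2)$ and the concavity inequality from $(A_3)$ to neutralise the $\nabla A(x)\cdot x\, u^2$ term, while the complementary share supplies a strictly positive multiple of $A_0\int u^2$ via the uniform lower bound $A(x)\ge A_0>0$ from $(A_1)$.

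Taking $c$ to be the minimum of the positive coefficients produced in the preceding two paragraphs then closes the proof. The main obstacle is the middle step: $(A_3)$ on its own gives only a non-strict pointwise inequality, and converting this into a \emph{uniform} positive multiple of $\int u^2$ really does require the joint use of $(A_1)$--$(A_3)$; by contrast, the algebraic substitution, the positivity of the three leading coefficients, and the final minimisation over all terms are entirely routine.
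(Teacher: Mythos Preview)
Your algebraic substitution and the pointwise inequality $(\alpha+\beta-2)A(x)\ge \nabla A(x)\cdot x$ deduced from $(A_3)$ via $f(1)\ge f'(1)$ are both correct. The gap is in the ``splitting'' step. To neutralise $\nabla A(x)\cdot x$ with only a proper fraction $\theta<1$ of $(\alpha+\beta-2)A(x)$ you would need $\theta(\alpha+\beta-2)A(x)\ge\nabla A(x)\cdot x$ pointwise, and nothing in $(A_1)$--$(A_3)$ delivers this: $(A_3)$ gives only the case $\theta=1$, while $(A_2)$ merely bounds $\nabla A(x)\cdot x$ by some constant $M$ bearing no a~priori relation to $A_0$, so there is no reason for $\theta(\alpha+\beta-2)A_0\ge M$ to hold. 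As written, your argument therefore does not produce a uniform $c>0$ in front of $\int u^2$.

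The paper avoids this by using a \emph{finite} rescaling rather than the infinitesimal constraint $G=0$: for a small fixed $t\in(0,1)$ it computes $I(u_t,v_t)-t^{N+\alpha+\beta}I(u,v)$, whose $u^2$--coefficient is $\tfrac{1}{2}\big(t^{N+2}A(tx)-t^{N+\alpha+\beta}A(x)\big)$; using only $A(tx)\ge A_0\ge (A_0/A_\infty)A(x)$ from $(A_1)$ and choosing $t$ small makes this $\ge\gamma_1>0$ uniformly, and Lemma~\ref{le22} then gives $I(u_t,v_t)\le I(u,v)$. Translated into your language, this amounts to replacing the tangent inequality $f(1)\ge f'(1)$ by the secant bound (valid for concave $f$ and any $s_0\in(0,1)$)
\[
f(1)-f'(1)\ \ge\ \frac{f(s_0)-s_0 f(1)}{1-s_0}\ =\ \frac{s_0^{\frac{N+2}{N+\alpha+\beta}}A\big(s_0^{\frac{1}{N+\alpha+\beta}}x\big)-s_0\,A(x)}{1-s_0}\ \ge\ \frac{s_0^{\frac{N+2}{N+\alpha+\beta}}A_0-s_0\,A_\infty}{1-s_0},
\]
which is a positive constant independent of $x$ once $s_0$ is taken small enough (recall the exponent $\tfrac{N+2}{N+\alpha+\beta}<1$). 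So your route can be repaired, but via $(A_1)$ and the concavity in $(A_3)$; assumption $(A_2)$ plays no role here.
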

\begin{proof} Take $(u,v)\in \mathcal{M}$ and choose $t\in(0, 1).$ We compute

$$\aligned &\ I(u_{t},v_{t})-t^{N+\alpha+\beta}I(u,v)\\
=&\ \int_{\mathbb R^{N}}\bigg(\frac{t^{N}}{2}-\frac{t^{N+\alpha+\beta}}{2}\bigg)(|\nabla v|^{2}+|\nabla v|^{2})\\
&\ +\int_{\mathbb R^{N}}\bigg(\frac{t^{N+2}}{2}-\frac{t^{N+\alpha+\beta}}{2}\bigg)(u^{2}|\nabla u|^{2}+v^{2}|\nabla v|^{2})\\
&\ +\int_{\mathbb R^{N}}\bigg(\frac{t^{N+2}}{2}A(tx)-\frac{t^{N+\alpha+\beta}}{2}A(x)\bigg)u^{2}\\
&\ +\int_{\mathbb R^{N}}\bigg(\frac{t^{N+2}}{2}-\frac{t^{N+\alpha+\beta}}{2}\bigg)Bv^{2}.\endaligned$$
By $(A_{1})$, there exists $\delta_{1}\in(0, 1)$ depending only on $A_{0}$ and $A_{\infty}$ such that $A(tx)\geq A_{0}\geq\delta_{1} A_{\infty}\geq \delta_{1} A(x)$,  $B\geq\delta_{2}B$ for some positive constant $\delta_{2}\in(0, 1)$ depending only on $B$. Choosing a smaller $t$, we obtain that
$$\frac{t^{N+2}}{2}A(tx)-\frac{t^{N+\alpha+\beta}}{2}A(x)\geq \bigg(\delta_{1}\frac{t^{N+2}}{2}-\frac{t^{N+\alpha+\beta}}{2}\bigg)A(x)\geq\gamma_{1},  $$
$$\frac{t^{N+2}}{2}B-\frac{t^{N+\alpha+\beta}}{2}B\geq B\bigg(\delta_{2}\frac{t^{N+2}}{2}-\frac{t^{N+\alpha+\beta}}{2}\bigg)\geq\gamma_{2}
\ \ \ \ \ \ \ \ \ \ \ \ $$
for a fixed constant $\gamma=\min\{\gamma_{1},\gamma_{2}\}>0$. By Lemma \ref{le22}, $I(u_{t},v_{t})\leq I(u,v)$ and then
$$\aligned &\ (1-t^{N+\alpha+\beta})I(u,v)\\
\geq&\ I(u_{t},v_{t})-t^{N+\alpha+\beta}I(u,v)\\
\geq& \ \gamma\int_{\mathbb R^{N}} (u^{2}+ v^{2}+|\nabla u|^{2}+|\nabla v|^{2}+u^{2}|\nabla u|^{2}+v^{2}|\nabla v|^{2} ).\endaligned$$
We conclude by taking a smaller $\gamma$ and choosing $c=\frac{\gamma }{1-t^{N+\alpha+\beta}}$.
\end{proof}

\mysection{Proof of Theorem \ref{th11} }

In this section, we prove the existence of positive ground state solutions of Nehari-Poho\v{z}aev type to (\ref{eq11}).

\begin{Lemma}\label{le31}
Let $ R>0, \ q\in[2,\frac{4N}{N-2}).$ If $ \{u_{n}\}$ is bounded in $H$ and
$$ \lim_{n\rightarrow\infty}\sup_{y\in \mathbb R^{N}}\int_{B_{R}(y)}|u_{n}|^{q}=0,$$
then we have $u_{n}\rightarrow0$ in $ L^{p}(\mathbb R^{N})$ for $p\in(2,\frac{4N}{N-2}).$
\end{Lemma}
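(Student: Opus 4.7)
The plan is to deduce the lemma from the classical Lions vanishing principle in $H^1(\mathbb{R}^N)$ and then stretch the exponent range up to $4N/(N-2)$ by exploiting the additional regularity built into $H$.

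First I would extract two uniform bounds from $\{u_n\}$ bounded in $H$: the sequence is bounded in $H^1(\mathbb{R}^N)$, and $\nabla(u_n^2)=2u_n\nabla u_n$ is bounded in $L^2(\mathbb{R}^N)$ because $\|\nabla(u_n^2)\|_{L^2}^2=4\int_{\mathbb{R}^N}u_n^2|\nabla u_n|^2$. Since $u_n^2$ is also uniformly bounded in $L^1$, Sobolev applied to $u_n^2$ yields $\|u_n^2\|_{L^{2^*}}\leq C$, that is, $\{u_n\}$ is uniformly bounded in $L^{4N/(N-2)}$. Interpolating with the $L^2$ bound, $\{u_n\}$ is uniformly bounded in $L^s(\mathbb{R}^N)$ for every $s\in[2,4N/(N-2)]$.

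Second, I would promote the hypothesis to vanishing at exponent $2$. H\"older's inequality on each ball gives $\int_{B_R(y)}|u_n|^2 \leq |B_R|^{1-2/q}\bigl(\int_{B_R(y)}|u_n|^q\bigr)^{2/q}$, so $\sup_{y\in\mathbb{R}^N}\int_{B_R(y)}|u_n|^2\to 0$. Combined with the $H^1$-boundedness, the classical Lions vanishing lemma delivers $u_n\to 0$ in $L^{p_0}(\mathbb{R}^N)$ for every $p_0\in(2,2^*)$, where $2^*=2N/(N-2)$.

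Finally I would cover the full range by interpolation. For $p\in(2,2^*)$ the conclusion is already in the previous step. For $p\in[2^*,4N/(N-2))$, pick any $p_0\in(2,2^*)$ and write $1/p=\theta/p_0+(1-\theta)/p_1$ with $p_1=4N/(N-2)$ and a unique $\theta\in(0,1)$. The inequality $\|u_n\|_{L^p}\leq\|u_n\|_{L^{p_0}}^{\theta}\|u_n\|_{L^{p_1}}^{1-\theta}$, together with $\|u_n\|_{L^{p_0}}\to 0$ and the uniform $L^{p_1}$-bound from the first step, forces $\|u_n\|_{L^p}\to 0$. The main subtlety is exactly this extension past the Sobolev exponent $2^*$: without the quasilinear term, the sequence would only be controlled in $L^s$ for $s\leq 2^*$ and the result would reduce to the classical Lions lemma; it is precisely the observation $u_n^2\in D^{1,2}(\mathbb{R}^N)$, available because $\{u_n\}\subset H$, that raises the upper exponent from $2N/(N-2)$ to $4N/(N-2)$.
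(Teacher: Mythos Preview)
Your argument is correct. The paper does not actually prove this lemma; it only writes ``The proof is similar to the proof of \cite[Lemma 2.2]{wz}.'' Your route---reducing the vanishing hypothesis to exponent $2$ by H\"older, invoking the classical Lions lemma on $H^1$ for $p\in(2,2^*)$, and then interpolating against the uniform $L^{4N/(N-2)}$ bound coming from $u_n^2\in D^{1,2}(\mathbb{R}^N)$---is the standard strategy and is precisely what the cited reference does, so your proof matches the intended one.
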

\begin{proof}  The proof is similar to the proof of  \cite[Lemma 2.2]{wz}.
\end{proof}
\begin{Lemma}\label{le332}
 Let  $u_{n}\rightharpoonup u, v_{n}\rightharpoonup v$  in $H^{1}(\mathbb R^{N})$, $u_{n}\rightarrow u, v_{n}\rightarrow v$ a.e in $\mathbb R^{N}$. Then
$$
\lim_{n\rightarrow\infty}\int_{\mathbb R^{N}}|u_{n}|^{\alpha}|v_{n}|^{\beta}-\int_{\mathbb R^{N}}|u|^{\alpha}|v|^{\beta}
=\lim_{n\rightarrow\infty}\int_{\mathbb R^{N}}|u_{n}-u|^{\alpha}|v_{n}-v|^{\beta}.
$$

\end{Lemma}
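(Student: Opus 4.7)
This is the classical Brezis--Lieb splitting statement for the coupling nonlinearity $|u|^{\alpha}|v|^{\beta}$. Write $w_{n}:=u_{n}-u$ and $z_{n}:=v_{n}-v$, so that $w_{n},z_{n}\to 0$ a.e. Since $u_{n}\to u$ and $v_{n}\to v$ a.e., both
$$|u_{n}|^{\alpha}|v_{n}|^{\beta}\to |u|^{\alpha}|v|^{\beta},\qquad |w_{n}|^{\alpha}|z_{n}|^{\beta}\to 0\quad\text{a.e.},$$
so the advertised identity amounts to showing that $|u_{n}|^{\alpha}|v_{n}|^{\beta}-|w_{n}|^{\alpha}|z_{n}|^{\beta}\to |u|^{\alpha}|v|^{\beta}$ in $L^{1}(\R^{N})$, after which one simply integrates.

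The key ingredient is a pointwise product inequality: for every $\varepsilon>0$ there exists $C_{\varepsilon}>0$ such that, for all real $a,b,c,d$,
$$\bigl||a+c|^{\alpha}|b+d|^{\beta}-|c|^{\alpha}|d|^{\beta}-|a|^{\alpha}|b|^{\beta}\bigr|\le\varepsilon\bigl(|c|^{\alpha+\beta}+|d|^{\alpha+\beta}\bigr)+C_{\varepsilon}\bigl(|a|^{\alpha+\beta}+|b|^{\alpha+\beta}\bigr).$$
I would prove this by writing
$$|a+c|^{\alpha}|b+d|^{\beta}-|c|^{\alpha}|d|^{\beta}=\bigl(|a+c|^{\alpha}-|c|^{\alpha}\bigr)|b+d|^{\beta}+|c|^{\alpha}\bigl(|b+d|^{\beta}-|d|^{\beta}\bigr),$$
invoking the scalar Brezis--Lieb estimate $\bigl||s+t|^{p}-|t|^{p}\bigr|\le\eta|t|^{p}+C_{\eta}|s|^{p}$ (valid for $p\ge 1$) on each factor, and then absorbing the cross terms via Young's inequality $|c|^{\alpha}|b|^{\beta}\le\eta|c|^{\alpha+\beta}+C_{\eta}|b|^{\alpha+\beta}$ (and analogously for $|a|^{\alpha}|d|^{\beta}$ and the residual pure term $|a|^{\alpha}|b|^{\beta}$).

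Applying this pointwise with $(a,b,c,d)=(u(x),v(x),w_{n}(x),z_{n}(x))$, the function
$$\Phi_{n}^{\varepsilon}:=\Bigl(\bigl||u_{n}|^{\alpha}|v_{n}|^{\beta}-|w_{n}|^{\alpha}|z_{n}|^{\beta}-|u|^{\alpha}|v|^{\beta}\bigr|-\varepsilon\bigl(|w_{n}|^{\alpha+\beta}+|z_{n}|^{\alpha+\beta}\bigr)\Bigr)^{+}$$
is dominated by the fixed integrable function $C_{\varepsilon}\bigl(|u|^{\alpha+\beta}+|v|^{\alpha+\beta}\bigr)$ and converges to $0$ a.e., so $\int_{\R^{N}}\Phi_{n}^{\varepsilon}\to 0$ by dominated convergence. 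Coupled with the uniform $L^{1}$--bound on $|w_{n}|^{\alpha+\beta}+|z_{n}|^{\alpha+\beta}$ (available from the underlying boundedness of $\{u_{n}\},\{v_{n}\}$ in $L^{\alpha+\beta}(\R^{N})$ built into the setup of the paper), this yields $\limsup_{n}\int_{\R^{N}}\bigl||u_{n}|^{\alpha}|v_{n}|^{\beta}-|w_{n}|^{\alpha}|z_{n}|^{\beta}-|u|^{\alpha}|v|^{\beta}\bigr|\le C\varepsilon$; letting $\varepsilon\downarrow 0$ closes the argument. The main obstacle is the careful derivation of the pointwise product inequality, in particular the management of the cross terms $|c|^{\alpha}|b|^{\beta}$ and $|a|^{\alpha}|d|^{\beta}$ that appear upon telescoping; once that is in hand the remainder is a standard Brezis--Lieb style dominated-convergence argument.
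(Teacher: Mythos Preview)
Your argument is correct: the pointwise product inequality you state does hold (exactly via the telescoping, scalar Brezis--Lieb estimate, and Young-inequality absorptions you outline), and once it is in hand the truncated dominated-convergence step closes the proof just as in the original Brezis--Lieb lemma.

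The paper takes a genuinely different route. Rather than a pointwise inequality, it telescopes at the level of the integrals,
\[
\int|u_n|^\alpha|v_n|^\beta-\int|u_n-u|^\alpha|v_n-v|^\beta
=\int\bigl(|u_n|^\alpha-|u_n-u|^\alpha\bigr)|v_n|^\beta+\int|u_n-u|^\alpha\bigl(|v_n|^\beta-|v_n-v|^\beta\bigr),
\]
and then invokes an external result (\cite[Lemma 2.5]{mj}) to the effect that $|u_n|^\alpha-|u_n-u|^\alpha\to|u|^\alpha$ \emph{strongly} in $L^{p/\alpha}$ (and likewise for the $v$-factor), pairing this with the weak convergences $|v_n|^\beta\rightharpoonup|v|^\beta$ in $L^{p/\beta}$ and $|u_n-u|^\alpha\rightharpoonup 0$ in $L^{p/\alpha}$ to pass to the limit in each piece. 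So the paper's argument is a strong-times-weak duality trick resting on a cited black box, whereas yours is a self-contained replay of the classical Brezis--Lieb mechanism; yours is longer but needs no external citation, the paper's is shorter but imports the key step. Both proofs tacitly use boundedness of $u_n,v_n$ (and hence of $u,v$) in $L^{\alpha+\beta}$, which is not literally guaranteed by ``$u_n\rightharpoonup u$ in $H^1$'' once $\alpha+\beta>2N/(N-2)$, but is available in the intended application because the sequences lie in $H$.
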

\begin{proof}
For $n=1,\ 2,\ldots$, we have that
$$\begin{aligned}
&\ \int_{\mathbb R^{N}}|u_n|^\alpha |v_n|^\beta -\int_{\mathbb R^{N}}|u_n-u|^\alpha |v_n-v|^\beta\\
=&\int_{\mathbb R^{N}}(|u_n|^\alpha-|u_n-u|^\alpha) |v_n|^\beta+\int_{\mathbb R^{N}}|u_n-u|^\alpha (|v_n|^\beta-|v_n-v|^\beta).
\end{aligned}$$
Since $u_n\rightharpoonup u, v_n\rightharpoonup v$ in $H^1 (\mathbb{R}^N)$, from \cite[Lemma 2.5]{mj}, one has
$$\int_{\mathbb R^{N}}(|u_n|^\alpha-|u_n-u|^\alpha-|u|^\alpha)^{\frac{p}{\alpha}}\rightarrow0,\ \ n\rightarrow\infty,$$
which means that $|u_n|^\alpha-|u_n-u|^\alpha \rightarrow |u|^\alpha $  in $L^{\frac{p}{\alpha}} (\mathbb{R}^N)$.
Using $|v_n|^\beta \rightharpoonup |v|^\beta$ in $L^{\frac{p}{\beta}} (\mathbb{R}^N)$, it follows from $\alpha +\beta = p$ that
$$\int_{\mathbb R^{N}}(|u_n|^\alpha-|u_n-u|^\alpha) |v_n|^\beta\rightarrow\int_{\mathbb R^{N}}|u|^\alpha |v|^\beta,\ \ n\rightarrow\infty.$$
Similarly, $|v_n|^\beta-|v_n-v|^\beta \rightarrow |v|^\beta $  in $L^{\frac{p}{\beta}} (\mathbb{R}^N)$.
As $|u_n-u|^\alpha \rightharpoonup 0$ in $L^{\frac{p}{\alpha}} (\mathbb{R}^N)$, we obtain that
$$\int_{\mathbb R^{N}}|u_n-u|^\alpha (|v_n|^\beta-|v_n-v|^\beta)\rightarrow0,\ \ n\rightarrow\infty.$$
This proves the lemma.
\end{proof}
\begin{Lemma}\label{le32}
$m$ is achieved at some $(u,v)\in \mathcal{M}$.
\end{Lemma}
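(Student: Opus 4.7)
The plan is to run a concentration--compactness argument on a minimizing sequence for $m$ in $\mathcal{M}$, using Lemma \ref{le24} for boundedness, Lemma \ref{le31} to rule out vanishing, Lemma \ref{le332} to pass the coupling term to the limit, and the scaling of Lemma \ref{le22} to project the weak limit back onto $\mathcal{M}$.

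First I would pick $(u_n,v_n)\subset\mathcal{M}$ with $I(u_n,v_n)\to m$. Lemma \ref{le24} immediately bounds $\int(|\nabla u_n|^2+|\nabla v_n|^2+u_n^2+v_n^2+u_n^2|\nabla u_n|^2+v_n^2|\nabla v_n|^2)$, so the sequence is bounded in $X$. The constraint $G(u_n,v_n)=0$ together with these bounds forces $\int|u_n|^\alpha|v_n|^\beta\geq c_0>0$, and H\"older then keeps both $\|u_n\|_{L^{\alpha+\beta}}$ and $\|v_n\|_{L^{\alpha+\beta}}$ bounded below. Lemma \ref{le31} then furnishes $y_n\in\mathbb R^N$ and $R,\delta>0$ with
\[
\int_{B_R(y_n)}\bigl(|u_n|^{\alpha+\beta}+|v_n|^{\alpha+\beta}\bigr)\geq\delta.
\]
Setting $\tilde u_n(x)=u_n(x+y_n)$ and $\tilde v_n(x)=v_n(x+y_n)$, a subsequence converges weakly in $H^1$ and almost everywhere to some $(u,v)\neq(0,0)$, with strong convergence in $L^p_{\mathrm{loc}}$ for $p\in[2,\tfrac{4N}{N-2})$.

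To exclude the possibility $|y_n|\to\infty$, I would compare with the ``problem at infinity'' obtained by replacing $A(x)$ by $A_\infty$, denoting the corresponding functional, constraint set, and level by $I_\infty$, $\mathcal M_\infty$, $m_\infty$. Testing any fixed admissible pair against translates of itself gives $m\leq m_\infty$, while weak lower semicontinuity of the kinetic, potential, and quasilinear terms (the last by rewriting $\int u^2|\nabla u|^2=\tfrac14\int|\nabla u^2|^2$ and identifying the weak $L^2$-limit of $\nabla\tilde u_n^2$ with $\nabla u^2$ through the a.e.\ convergence of $\tilde u_n$), combined with Lemma \ref{le332} for the coupling term, yields $G_\infty(u,v)\leq 0$. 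Applying Lemma \ref{le22} with potential $A_\infty$ then produces $\bar t\leq 1$ with $(u_{\bar t},v_{\bar t})\in\mathcal M_\infty$, so $m_\infty\leq I_\infty(u_{\bar t},v_{\bar t})\leq\liminf I(u_n,v_n)=m$, forcing $m=m_\infty$ and identifying $(u_{\bar t},v_{\bar t})$ as a minimizer. Thus one may assume $y_n$ bounded, and after translation $(u_n,v_n)\rightharpoonup(u,v)$ in $X$; a short argument via Lemma \ref{le332} and the lower bound $c_0>0$ rules out the degenerate possibilities $u\equiv 0$ or $v\equiv 0$, so both components are nontrivial. Lemma \ref{le22} then furnishes a unique $\bar t>0$ with $(u_{\bar t},v_{\bar t})\in\mathcal{M}$, and the same l.s.c.\ chain gives $\bar t\leq 1$ and
\[
m\leq I(u_{\bar t},v_{\bar t})\leq\liminf_n I(u_n,v_n)=m,
\]
so the infimum is attained; replacing $u,v$ by $|u|,|v|$, under which $I$ and $G$ are invariant, yields a positive minimizer.

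The main obstacles are the weak lower semicontinuity of the non-convex quasilinear term $\int u^2|\nabla u|^2$, which is addressed through the $\tfrac14\int|\nabla u^2|^2$ rewriting and a.e.\ identification of the weak limit, and the simultaneous non-triviality of \emph{both} components in the limit, required because membership in $\mathcal{M}$ forces both $u$ and $v$ to be nonzero; this second point is precisely what Lemma \ref{le332} is designed to handle, by splitting the joint coupling $\int|u_n|^\alpha|v_n|^\beta$ cleanly into its limit value and a remainder controlled by whichever component would degenerate.
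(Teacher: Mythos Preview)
Your proposal has a genuine gap at the step where you claim that weak lower semicontinuity of the positive terms, ``combined with Lemma \ref{le332} for the coupling term, yields $G_\infty(u,v)\leq 0$'' (and likewise $G(u,v)\le 0$ in the bounded case, which you need for $\bar t\le 1$). The coupling term enters $G$ with a \emph{negative} sign, and Lemma \ref{le332} only gives the Br\'ezis--Lieb splitting
\[
\int_{\mathbb R^N}|u_n|^\alpha|v_n|^\beta
=\int_{\mathbb R^N}|u|^\alpha|v|^\beta+\int_{\mathbb R^N}|u_n-u|^\alpha|v_n-v|^\beta+o(1)
\ \ge\ \int_{\mathbb R^N}|u|^\alpha|v|^\beta+o(1),
\]
which goes the wrong way: from $G(u_n,v_n)=0$ and l.s.c.\ of the positive pieces you only obtain $G(u,v)\le C\limsup_n\int|u_n-u|^\alpha|v_n-v|^\beta$, not $G(u,v)\le 0$. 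For the same reason your chain $I(u_{\bar t},v_{\bar t})\le\liminf_n I(u_n,v_n)$ fails: for fixed $t$ the inequality $I(u_t,v_t)\le\liminf_n I((u_n)_t,(v_n)_t)$ needs $\int|u_n|^\alpha|v_n|^\beta\to\int|u|^\alpha|v|^\beta$, i.e.\ strong $L^{\alpha+\beta}$ convergence of the (translated) sequence, which you never establish. Simply invoking Lemma \ref{le332} does not rule out that a fixed positive amount of the coupling leaks to infinity.

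This is exactly the obstacle the paper's proof is built around. Its Steps~2--3 perform a quantitative splitting with a cut-off at scale $R$ around the concentration center $x_n$, compare $I((u_n)_t,(v_n)_t)$ with the sum of the energies of the inner and outer pieces uniformly in $t$ on a compact interval, and use the min--max structure of Lemma \ref{le22} to force the outer piece to satisfy $\|(\lambda_n,\mu_n)\|+\|(\lambda_n^2,\mu_n^2)\|\le C\varepsilon^\tau$. Letting $\varepsilon\to0$ then upgrades weak to strong $L^2$ (hence $L^{\alpha+\beta}$) convergence of the translated sequence, after which the l.s.c.\ comparison $m\ge\liminf_n I((u_n)_t,(v_n)_t)\ge I(u_t,v_t)$ is legitimate and simultaneously guarantees both components of the limit are nonzero. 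Your outline skips this compactness step; without it neither the bounded nor the unbounded case closes. (In the unbounded case you also conflate a minimizer for $I_\infty|_{\mathcal M_\infty}$ with one for $I|_{\mathcal M}$; the paper avoids this by comparing directly with $I(\omega_t,\nu_t)$ via $A_\infty\int\omega^2\ge\int A(tx)\omega^2$ once strong convergence is in hand.)
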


\begin{proof} Let $(u_{n},v_{n})\in \mathcal{M}$ so that $I(u_{n},v_{n})\rightarrow m$. By Lemma \ref{le24}, $\{u_{n}\}$, $\{v_{n}\}$, $\{u_{n}^{2}\}$  and $\{v_{n}^{2}\}$ are bounded in $H^{1}(\mathbb R^{N}).$ Then, there exist subsequence of $\{u_{n}\}$, $\{v_{n}\}$ (still denoted by $\{u_{n}\}$, $\{v_{n}\}$) such that $u_{n}\rightharpoonup u$ and $u_{n}^{2}\rightharpoonup u^{2}$ in $H^{1}(\mathbb R^{N})$, $v_{n}\rightharpoonup v$ and $v_{n}^{2}\rightharpoonup v^{2}$ in $H^{1}(\mathbb R^{N})$,
which implies that $\{u_{n}\}$ and $\{v_{n}\}$ are bounded in $L^{\alpha+\beta}(\mathbb R^{N})$, $\alpha+\beta\in(2,\frac{4N}{N-2})$. The proof consists of four steps.
\vskip4pt
{\bf Step 1.} $\int_{\mathbb R^{N}}|u_{n}|^{\alpha}|v_{n}|^{\beta}\not\rightarrow0$.
\vskip4pt
It follows from (\ref{eq21}) and Lemma \ref{le23} that
$$ \aligned I(u_{n},v_{n})&=\int_{\mathbb R^{N}}\bigg(\frac{1}{2}(|\nabla u_{n}|^{2}+|\nabla v_{n}|^{2}+A(x)u_{n}^{2}+Bv_{n}^{2}\\
&\ \ \ \ +u_{n}^{2}|\nabla u_{n}|^{2}+v_{n}^{2}|\nabla v_{n}|^{2})-\frac{2}{\alpha+\beta} |u_{n}|^{\alpha}|v_{n}|^{\beta}\bigg)\\
&\rightarrow m>0,\endaligned$$
then $$\|(u_{n},v_{n})\|+\|(u_{n}^{2},v_{n}^{2})\|\not\rightarrow0.$$
 By Lemma \ref{le22}, for $t > 1$,
$$\aligned m&\leftarrow I(u_{n},v_{n})\\
&\geq I((u_{n})_{t},(v_{n})_{t})\\
&=\frac{t^{N}}{2}\int_{\mathbb R^{N}}(|\nabla u_{n}|^{2}+|\nabla v_{n}|^{2})+\frac{t^{N+2}}{2}\int_{\mathbb R^{N}}(A(tx)u_{n}^{2}+Bv_{n}^{2})\\
&\ \ \ \ +\frac{t^{N+2}}{2}\int_{\mathbb R^{N}}(u_{n}^{2}|\nabla u_{n}|^{2}+v_{n}^{2}|\nabla v_{n}|^{2}) -\frac{2t^{N+\alpha+\beta}}{\alpha+\beta}\int_{\mathbb R^{N}}|u_{n}|^{\alpha}|v_{n}|^{\beta}\\
&\geq \frac{t^{N}}{2}\int_{\mathbb R^{N}}(|\nabla u_{n}|^{2}+|\nabla v_{n}|^{2}+A_{0}u_{n}^{2} +Bv_{n}^{2}+u_{n}^{2}|\nabla u_{n}|^{2}+v_{n}^{2}|\nabla v_{n}|^{2})\\
&\ \ \ \ -\frac{2t^{N+\alpha+\beta}}{\alpha+\beta}\int_{\mathbb R^{N}}|u_{n}|^{\alpha}|v_{n}|^{\beta}\\
&\geq \frac{t^{N}}{2}\delta-\frac{2t^{N+\alpha+\beta}}{\alpha+\beta}\int_{\mathbb R^{N}}|u_{n}|^{\alpha}|v_{n}|^{\beta},\endaligned$$
where $\delta$ is a fixed constant. It suffices to chose $t > 1$ so that $ \frac{t^{N}\delta}{2}>2m$ to get a lower bound for $\int_{\mathbb R^{N}}|u_{n}|^{\alpha}|v_{n}|^{\beta}$.

Therefore, we may assume that
\begin{equation}\label{eq31}
\int_{\mathbb R^{N}}|u_{n}|^{\alpha}|v_{n}|^{\beta}\rightarrow D\in(0,\infty).
\end{equation}
\vskip4pt
{\bf Step 2.} Splitting by concentration-compactness.
\vskip4pt
By using (\ref{eq31}) and Lemma \ref{le31}, there exist $\delta>0$ and $\{x_{n}\}\subset \mathbb R^{N}$  such that
\begin{equation}\label{eq031}
\int_{B_{x_{n}}(1)}|u_{n}|^{\alpha+\beta}>\delta>0.
\end{equation}
For $\varepsilon>0$ and set $R>\max\{1, \varepsilon^{-1}\}$, $\eta_{R}(t)$ a smooth function defined on $[0,+\infty)$ such that
\vskip4pt
a) $\eta_{R}(t)=1$ for $0\leq t\leq R,$

b) $\eta_{R}(t)=0$ for $t\geq2R$,

c) $\eta'_{R}(t)\leq\frac{2}{R}.$
\vskip4pt
Define $ \theta_{n}(x)=\eta_{R}(|x-x_{n}|)u_{n}(x)$, $\lambda_{n}(x)=(1-\eta_{R}(|x-x_{n}|))u_{n}(x)$, $\xi_{n}(x)=\eta_{R}(|x-x_{n}|)v_{n}(x)$ and $\mu_{n}(x)=(1-\eta_{R}(|x-x_{n}|))v_{n}(x)$.
Obviously, $(\theta_{n},\xi_{n}),(\lambda_{n},\mu_{n})\in X$ and $(u_{n},v_{n})=( \theta_{n},\xi_{n})+(\lambda_{n},\mu_{n})=( \theta_{n}+\lambda_{n},\xi_{n}+\mu_{n})$. Observe that in particular
\begin{equation}\label{eq32}
\liminf_{n\rightarrow+\infty}\int_{B_{x_{n}}(R)}| \theta_{n}|^{\alpha+\beta}\geq\delta.
\end{equation}

{\bf Step 3.}
There exists $\tau> 0$ independent of $\varepsilon$ and $n_{0}= n_{0}(\varepsilon)$ such that $\|(\lambda_{n},\mu_{n})\|+\|(\lambda_{n}^{2},\mu_{n}^{2})\|\leq C\varepsilon^{\tau}$ for all $ n\geq n_{0}$.
\vskip4pt
Define $(\omega_{n},\nu_{n})=(u_{n}(\cdot+ x_{n}),v_{n}(\cdot+ x_{n}))$, where $\{x_{n}\}$ is given in (\ref{eq031}). Clearly, $\omega_{n}\rightharpoonup \omega $ and $\nu_{n}\rightharpoonup \nu$ in $H^{1}(\mathbb R^{N})$. By taking a larger $R$, we may assume that $\int_{B_{2R\backslash R}}|\omega|^{\alpha}| \nu|^{\beta}<\varepsilon $, where $ B_{2R\backslash R}$ denotes the anulus centered in 0 with radii $R$ and $2R$. Then,
\begin{equation}\label{eq33}
\bigg| \int_{\mathbb R^{N}}|u_{n}|^{\alpha}|v_{n}|^{\beta}-\int_{\mathbb R^{N}}| \theta_{n}|^{\alpha}|\xi_{n}|^{\beta}-\int_{\mathbb R^{N}}|\lambda_{n}|^{\alpha}|\mu_{n}|^{\beta}\bigg|\leq 3\varepsilon.
\end{equation}
Since $|\nabla \omega_{n}|^{2}$, $|\nabla \nu_{n}|^{2}$ are uniformly bounded in  $L^{1}(\mathbb R^{N})$, up to a subsequence,  $|\nabla \omega_{n}|^{2}$,  $|\nabla \nu_{n}|^{2}$ converge (in the sense of measure) to a certain positive measure  $meas(\mathbb R^{N})<+\infty$.  By enlarging $R$, we may assume that $meas(B_{2R\backslash R})<\varepsilon$. Then, for $n$ large enough,
$$\int_{\mathbb R^{N}}|\nabla u_{n}|^{2}\eta_{R}(|x-x_{n}|)(1-\eta_{R}(|x-x_{n}|))<\varepsilon, $$
$$\int_{\mathbb R^{N}}|\nabla v_{n}|^{2}\eta_{R}(|x-x_{n}|)(1-\eta_{R}(|x-x_{n}|))<\varepsilon,$$
\begin{equation}\label{eq34}
\aligned \bigg| \int_{\mathbb R^{N}}(|\nabla u_{n}|^{2} -|\nabla  \theta_{n}|^{2}-|\nabla \lambda_{n}|^{2}) \bigg|=\bigg|2\int_{\mathbb R^{N}}\nabla \theta_{n}\nabla \lambda_{n}\bigg|\leq C\varepsilon,\endaligned
\end{equation}
\begin{equation}\label{eq35}
\aligned \bigg| \int_{\mathbb R^{N}}(|\nabla v_{n}|^{2} -|\nabla \xi_{n}|^{2}-|\nabla \mu_{n}|^{2}) \bigg|=\bigg|2\int_{\mathbb R^{N}}\nabla \xi_{n}\nabla \mu_{n}\bigg|\leq C\varepsilon,\endaligned
\end{equation}
\begin{equation}\label{eq36}
\bigg|\int_{\mathbb R^{N}}(u_{n}^{2}|\nabla u_{n}|^{2} -\theta_{n}^{2}|\nabla  \theta_{n}|^{2}-\lambda_{n}^{2}|\nabla \lambda_{n}|^{2}) \bigg|< C\varepsilon,
\end{equation}
\begin{equation}\label{eq36}
\bigg|\int_{\mathbb R^{N}}(v_{n}^{2}|\nabla v_{n}|^{2} -\xi_{n}^{2}|\nabla \xi_{n}|^{2}-\mu_{n}^{2}|\nabla \mu_{n}|^{2}) \bigg|< C\varepsilon,
\end{equation}
\begin{equation}\label{eq37}
\bigg|\int_{\mathbb R^{N}}(A(tx)u_{n}^{2} -A(tx)\theta_{n}^{2}-A(tx)\lambda_{n}^{2}) \bigg|<C\varepsilon,\ \
\end{equation}
\begin{equation}\label{eq38}
 \bigg|\int_{\mathbb R^{N}}B(v_{n}^{2} -\xi_{n}^{2}-\mu_{n}^{2}) \bigg|< C\varepsilon.\ \ \ \ \ \ \ \ \ \
\end{equation}
By (\ref{eq33})-(\ref{eq38}) we obtain that for $n$ sufficiently large and $t>0$,
\begin{equation}\label{eq39}
\aligned &\ \bigg| I((u_{n})_{t},(v_{n})_{t})-I((\theta_{n})_{t},(\xi_{n})_{t})-I((\lambda_{n})_{t},(\mu_{n})_{t})\bigg|\\
\leq&\ C\varepsilon(t^{N}+t^{N+\alpha+\beta}).\endaligned
\end{equation}
Let us denote with $t_{1},t_{2}>0$  which maximize $h_{\theta_{n}\xi_{n}}(t)$ and $h_{\lambda_{n}\mu_{n}}(t)$ respectively, i.e.,
$$I((\theta_{n})_{t_{1}},(\xi_{n})_{t_{1}})=\max_{t>0}I((\theta_{n})_{t},(\xi_{n})_{t}),\ $$ $$I((\lambda_{n})_{t_{2}},(\mu_{n})_{t_{2}})=\max_{t>0}I((\lambda_{n})_{t},(\mu_{n})_{t}).$$
First, let us assume that $t_{1}\leq t_{2}$. Then,
\begin{equation}\label{eq310}
I((\lambda_{n})_{t},(\mu_{n})_{t})\geq0,\ \   t\leq t_{1}.
\end{equation}
\vskip4pt
{\bf Claim:} There exist $0<\tilde{t}<1<\hat{t}$ independent of  $\varepsilon$ such that $t_{1}\in(\tilde{t},\hat{t})$.

Indeed, take $\hat{t}=\left((\alpha+\beta)D^{-1}L\right)^{\frac{1}{\alpha+\beta-2}}$, where $D$ comes from (\ref{eq31}) and $L$ is large enough such that  $\hat{t}>1$ and moreover
\begin{equation}\label{eq311}
L\geq \int_{\mathbb R^{N}}(A_{\infty}u_{n}^{2}+ B v_{n}^{2}+|\nabla u_{n}|^{2}+|\nabla v_{n}|^{2}+ u_{n}^{2}|\nabla u_{n}|^{2}+v_{n}^{2}|\nabla v_{n}|^{2}),
\end{equation}
then
$$\aligned I((u_{n})_{\hat{t}},(v_{n})_{\hat{t}})&\leq \frac{\hat{t}^{N+2}}{2}\int_{\mathbb R^{N}}(A(\hat{t}x)u_{n}^{2}+ Bv_{n}^{2}+|\nabla u_{n}|^{2}+|\nabla v_{n}|^{2}\\
&\ \ \ \ + u_{n}^{2}|\nabla u_{n}|^{2}+v_{n}^{2}|\nabla v_{n}|^{2}-\frac{4}{\alpha+\beta}\hat{t}^{\alpha+\beta-2}|u_{n}|^{\alpha}|v_{n}|^{\beta})\\
&\leq -L\frac{\hat{t}^{N+2}}{2}\\
&<0.\endaligned $$
It follows from (\ref{eq39}) that
\begin{equation}\label{eq3111}
\aligned &\ I((u_{n})_{t},(v_{n})_{t})\\
\geq&\  I((\theta_{n})_{t},(\xi_{n})_{t})+I((\lambda_{n})_{t},(v_{n})_{t})-C\varepsilon,\ \ t\in(0,\hat{t}].\endaligned
\end{equation}
Then, taking a smaller $\varepsilon$,
$$I((\theta_{n})_{\hat{t}},(\xi_{n})_{\hat{t}})+I((\lambda_{n})_{\hat{t}},(v_{n})_{\hat{t}})<0.$$
Then $I((\theta_{n})_{\hat{t}},(\xi_{n})_{\hat{t}})<0$ or  $I((\lambda_{n})_{\hat{t}},(v_{n})_{\hat{t}})<0$. In any case Lemma \ref{le22} implies that $t_{1}<\hat{t}$  (recall that we are assuming $t_{1}\leq t_{2}$).

For the lower bound, take  $\tilde{t}=\left(\frac{m}{L}\right)^{\frac{1}{N}}$, where $L$ is chosen as in (\ref{eq311}). Let us point out that  $\tilde{t}<1$. For any $t\leq \tilde{t}$,
$$\aligned I((u_{n})_{t},(v_{n})_{t})&\leq \frac{\tilde{t}^{N}}{2}\int_{\mathbb R^{N}}(A(\hat{t}x)u_{n}^{2}+ B v_{n}^{2}+|\nabla u_{n}|^{2}+|\nabla v_{n}|^{2}\\
&\ \ \ \ + u_{n}^{2}|\nabla u_{n}|^{2}+v_{n}^{2}|\nabla v_{n}|^{2})\\
&\leq \frac{m}{2L}\int_{\mathbb R^{N}}(A_{\infty}u_{n}^{2}+ B v_{n}^{2}+|\nabla u_{n}|^{2}+|\nabla v_{n}|^{2}\\
&\ \ \ \ + u_{n}^{2}|\nabla u_{n}|^{2}+v_{n}^{2}|\nabla v_{n}|^{2})\\
&\leq\frac{m}{2}.\endaligned $$
By (\ref{eq310}) and (\ref{eq3111}),
\begin{equation}\label{eq313}
\aligned I((u_{n})_{t_{1}},(v_{n})_{t_{1}})&\geq I((\theta_{n})_{t_{1}},(\xi_{n})_{t_{1}})+I((\lambda_{n})_{t_{1}},(\mu_{n})_{t_{1}})-C\varepsilon\\
&\geq m-C\varepsilon,\endaligned
\end{equation}
by choosing a small $\varepsilon$, we have that $I((u_{n})_{t_{1}},(v_{n})_{t_{1}})\geq \frac{m}{2}$. Therefore, $t_{1}>\tilde{t}$. Since $(u_{n},v_{n})\in \mathcal{M}$, $h_{uv}$ reaches its maximum at  $t=1$. Then,
$$m\leftarrow I(u_{n},v_{n})\geq I((u_{n})_{t_{1}},(v_{n})_{t_{1}}), $$
and using (\ref{eq313}) we deduce, for $n$ large,
$$\aligned I((\lambda_{n})_{t},(\mu_{n})_{t})&\leq I((u_{n})_{t_{1}},(v_{n})_{t_{1}})-I((\theta_{n})_{t_{1}},(\xi_{n})_{t_{1}})+C\varepsilon\\
&\leq I(u_{n},v_{n})-m+C\varepsilon\\
&\leq2C \varepsilon,\endaligned$$  for $t\in(0,t_{1})$. Moreover, for any $t\in(0,\tilde{t}),$
$$\aligned 2C\varepsilon&\geq I((\lambda_{n})_{t},(\mu_{n})_{t})\\
&\geq \frac{t^{N+2}}{2}\int_{\mathbb R^{N}}(A(tx)\lambda_{n}^{2}+ B\mu_{n}^{2}+|\nabla \lambda_{n}|^{2}+|\nabla \mu_{n}|^{2}\\
&\ \ \ \ +\lambda_{n}^{2}|\nabla\lambda_{n}|^{2}+\mu_{n}^{2}|\nabla \mu_{n}|^{2})-\frac{2}{\alpha+\beta}t^{N+\alpha+\beta}\int_{\mathbb R^{N}}|\lambda_{n}|^{\alpha}|\mu_{n}|^{\beta}\\
&\geq \frac{t^{N+2}}{2}q_{n}-Kt^{N+\alpha+\beta},\endaligned$$
where
$$q_{n}=\int_{\mathbb R^{N}}(A_{0}\lambda_{n}^{2}+B\mu_{n}^{2}+|\nabla \lambda_{n}|^{2}+|\nabla \mu_{n}|^{2}+ \lambda_{n}^{2}|\nabla\lambda_{n}|^{2}+\mu_{n}^{2}|\nabla \mu_{n}|^{2})$$
is bounded and $K>D$. Observe that $ \frac{t^{N+2}}{2}q_{n}-Kt^{N+\alpha+\beta}=\frac{t^{N+2}}{4}q_{n}$ for $t=\left(\frac{q_{n}}{4K}\right)^{\frac{1}{\alpha+\beta-2}}$. By taking a larger $K$ we may assume that $\left(\frac{q_{n}}{4K}\right)^{\frac{1}{\alpha+\beta-2}}\leq \tilde{t}$. Then, we obtain
$$2C\varepsilon\geq I((\lambda_{n})_{t},(\mu_{n})_{t})\geq \left(\frac{q_{n}}{4K}\right)^{\frac{N+2}{\alpha+\beta-2}}\frac{q_{n}}{4 }\geq Cq_{n}^{\frac{N+\alpha+\beta}{\alpha+\beta-2}},$$
which implies that
\begin{equation}\label{eq314}
\|(\lambda_{n},\mu_{n})\|+\|(\lambda_{n}^{2},\mu_{n}^{2})\|\leq C\varepsilon^{\frac{\alpha+\beta-2}{2(N+\alpha+\beta)}}.
\end{equation}

In the case  $t_{1}>t_{2}$, we can argue similarly to conclude that $\|( \theta_{n},\xi_{n})\|+\|( \theta_{n}^{2},\xi_{n}^{2})\|\leq C\varepsilon^{\frac{\alpha+\beta-2}{2(N+\alpha+\beta)}}$. But, taking small  $\varepsilon$, this contradicts (\ref{eq32}), so (\ref{eq314}) holds.

\vskip4pt
{\bf Step 4:} The infimum of  $I|_{\mathcal{M}}$ is achieved.
\vskip4pt
 By {\bf Step 2},  $(\omega_{n},\nu_{n})=(u_{n}(\cdot+x_{n}),v_{n}(\cdot+x_{n})),$ $\omega_{n}\rightharpoonup \omega$, $\nu_{n}\rightharpoonup \nu$, $\omega_{n}^{2}\rightharpoonup \omega^{2}$ and $\nu_{n}^{2}\rightharpoonup \nu^{2}$ in $H^{1}(\mathbb R^{N})$. Moreover, by compactness, we know that $\omega_{n}\rightarrow \omega$ in $L_{loc}^{2}(\mathbb R^{N}),\nu_{n}\rightarrow  \nu$ in $L_{loc}^{2}(\mathbb R^{N}).$  Then $\omega\neq0$ since, by (\ref{eq32}),
$$ \delta<\liminf_{n\rightarrow+\infty}\int_{\mathbb R^{N}}| \theta_{n}|^{\alpha+\beta}\leq\lim_{n\rightarrow+\infty}\int_{B_{0}(2R)}|\omega_{n}|^{\alpha+\beta}=\int_{B_{0}(2R)}|\omega|^{\alpha+\beta}.$$
Since $(u_{n},v_{n})=( \theta_{n},\xi_{n})+(\lambda_{n},\mu_{n})$, moreover
$$\|(\lambda_{n},\mu_{n})\|+\|(\lambda_{n}^{2},\mu_{n}^{2})\|\leq C\varepsilon^{\frac{\alpha+\beta-2}{2(N+\alpha+\beta)}}.$$
By using H\"{o}lder inequality,
\begin{equation}\label{eq315}
\aligned \int_{\mathbb R^{N}}|u_{n}^{2}- \theta_{n}^{2}|&\leq \int_{\mathbb R^{N}}|\lambda_{n}|(|u_{n}|+| \theta_{n}|)\\
&\leq \bigg(\int_{\mathbb R^{N}}\lambda_{n}^{2}\bigg)^{\frac{1}{2}}\bigg(\int_{\mathbb R^{N}}(|u_{n}|+| \theta_{n}|)^{2}\bigg)^{\frac{1}{2}}\\
&\leq C\varepsilon^{\frac{\alpha+\beta-2}{2(N+\alpha+\beta)}}.
\endaligned
\end{equation}
\begin{equation}\label{eq316}
\aligned \int_{\mathbb R^{N}}|v_{n}^{2}-\xi_{n}^{2}|&\leq \int_{\mathbb R^{N}}|\mu_{n}|(|v_{n}|+|\xi_{n}|)\\
&\leq \bigg(\int_{\mathbb R^{N}}\mu_{n}^{2}\bigg)^{\frac{1}{2}}\bigg(\int_{\mathbb R^{N}}(|v_{n}|+|\xi_{n}|)^{2}\bigg)^{\frac{1}{2}}\\
&\leq C\varepsilon^{\frac{\alpha+\beta-2}{2(N+\alpha+\beta)}}.
\endaligned
\end{equation}
Furthermore,
$$\int_{\mathbb R^{N}}\theta_{n}^{2}\leq \int_{B_{0}(2R)}\omega_{n}^{2}\rightarrow\int_{B_{0}(2R)}\omega^{2}\leq\int_{\mathbb R^{N}}\omega^{2},  $$
$$\int_{\mathbb R^{N}}\xi_{n}^{2}\leq \int_{B_{0}(2R)}\omega_{n}^{2}\rightarrow\int_{B_{0}(2R)}\nu^{2}\leq\int_{\mathbb R^{N}}\nu^{2}.  $$
Combining these estimate with (\ref{eq315}), (\ref{eq316}), we get that
$$\liminf_{n\rightarrow+\infty}\int_{\mathbb R^{N}}\omega_{n}^{2}=\liminf_{n\rightarrow+\infty}\int_{\mathbb R^{N}}u_{n}^{2}\leq \int_{\mathbb R^{N}}\omega^{2}+C\varepsilon^{\frac{\alpha+\beta-2}{2(N+\alpha+\beta)}},$$
$$\liminf_{n\rightarrow+\infty}\int_{\mathbb R^{N}}\nu_{n}^{2}=\liminf_{n\rightarrow+\infty}\int_{\mathbb R^{N}}v_{n}^{2}\leq \int_{\mathbb R^{N}}\nu^{2}+C\varepsilon^{\frac{\alpha+\beta-2}{2(N+\alpha+\beta)}}.\ $$
Since $\varepsilon$ is arbitrary, we obtain that $\omega_{n}\rightarrow \omega,  \nu_{n}\rightarrow  \nu$ in $ L^{2}(\mathbb R^{N})$, by using interpolation inequality, $\omega_{n}\rightarrow \omega\neq0$  in  $L^{\alpha+\beta}(\mathbb R^{N})$, $\alpha+\beta\in[2,\frac{4N}{N-2}) $.
\vskip4pt
We discuss two cases:
\vskip4pt
{\bf Case 1:} $\{x_{n}\} $  is bounded. Assume, passing to a subsequence, that $x_{n}\rightarrow x_{0}$.  In this case $u_{n}\rightharpoonup u,v_{n}\rightharpoonup v$, $u_{n}^{2}\rightharpoonup u^{2}, v_{n}^{2}\rightharpoonup v^{2}$  in $H^{1}(\mathbb R^{N})$, $u_{n}\rightarrow u,v_{n}\rightarrow v$  in $L^{\alpha+\beta}(\mathbb R^{N})$ for  $\alpha+\beta\in[2,\frac{4N}{N-2}) $, where $u=\omega(\cdot-x_{0}),v= \nu(\cdot-x_{0}) $. By using Lemma \ref{le22} and the weak lower semi-continuity of the norm, we obtain that
$$m=\lim_{n\rightarrow+\infty}I( u_{n} ,v_{n} )\geq \liminf_{n\rightarrow+\infty} I((u_{n})_{t},(v_{n})_{t})\geq I(u_{t},v_{t}),\ \ t>0,$$
therefore, $\max\limits_{t>0}I(u_{t},v_{t})=m$, $u_{n}\rightarrow u,v_{n}\rightarrow v$, $u_{n}^{2}\rightarrow u^{2}, v_{n}^{2}\rightarrow v^{2}$ in $H^{1}(\mathbb R^{N})$. In particular, $(u,v)\in \mathcal{M}$ is a minimizer of $I|_{\mathcal{M}}$.
\vskip4pt
{\bf Case 2:} $\{x_{n}\}$ is unbounded. In this case, by $(A_{1})$ and Lebesgue dominated convergence theorem,
$$\aligned \lim_{n\rightarrow+\infty}\int_{\mathbb R^{N}}A(tx)u_{n}^{2}&=\lim_{n\rightarrow+\infty}\int_{\mathbb R^{N}}A(t(x+x_{n}))\omega_{n}^{2}\\
&=A_{\infty}\int_{\mathbb R^{N}}\omega^{2}\\
&\geq\int_{\mathbb R^{N}}A(tx)\omega^{2}\\
&=\lim_{n\rightarrow+\infty}\int_{\mathbb R^{N}}A(tx)\omega_{n}^{2},\endaligned $$
for $t>0$ fixed.  Then, by using Lemma \ref{le22}, Lemma \ref{le332} and the weak lower semi-continuity of the norm, for every $t>0$,
$$\aligned m&=\lim_{n\rightarrow+\infty}I( u_{n} ,v_{n} )\\
&\geq \liminf_{n\rightarrow+\infty} I((u_{n})_{t},(v_{n})_{t})\\
&\geq\liminf_{n\rightarrow+\infty} I((\omega_{n})_{t},(\nu_{n})_{t})\\
&\geq I((\omega)_{t},( \nu)_{t}).\endaligned$$
Taking $t^{z} $ so that $h_{\omega v}(t)=I(\omega_{t},\nu_{t}) $ reaches its maximum, we conclude that $(\omega_{t^{z}},\nu_{t^{z}})\in \mathcal{M}$  and is a minimizer for  $I|_{\mathcal{M}}$.

Based on a general idea used in \cite{lww}, the minimum of  $I|_{\mathcal{M}}$  is indeed a solution of our equation.

\vskip12pt
\noindent{\bf Proof of Theorem \ref{th11}.}
\vskip4pt
 Let $(\tilde{u},\tilde{v})\in \mathcal{M}$  be a minimizer of the functional $I|_{\mathcal{M}}$. By Lemma \ref{le22},
$$I(\tilde{u},\tilde{v})=\inf_{(u,v)\in X }\max\limits_{t>0}I(u_{t},v_{t})=m.$$
We argue by contradiction by assuming that $(\tilde{u},\tilde{v})$ is not a weak solution of (\ref{eq11}). Then, we can chose $\phi_{1},\phi_{2}\in C_{0}^{\infty}(\mathbb R^{N})$  such that
$$\aligned &\ \langle I'(\tilde{u},\tilde{v}),(\phi_{1},\phi_{2})\rangle\\
=& \int_{\mathbb R^{N}}\bigg( \nabla \tilde{u}\nabla \phi_{1}+ \nabla \tilde{v}\nabla \phi_{2}+\nabla(\tilde{u}^{2})\nabla(\tilde{u}\phi_{1})+\nabla(\tilde{v}^{2})\nabla(\tilde{v}\phi_{2}) \\
&\ +A(x)\tilde{u}\phi_{1}+B\tilde{v}\phi_{2}-\frac{2\alpha}{\alpha+\beta}|\tilde{u}|^{\alpha-2}\tilde{u}|\tilde{v}|^{\beta}\phi_{1}\\
&\  -\frac{2\beta}{\alpha+\beta}|\tilde{v}|^{\beta-2}\tilde{v}|\tilde{u}|^{\alpha}\phi_{2}\bigg)\\
<&\ -1.\endaligned$$
Then we fix $\varepsilon> 0$ sufficiently small such that
$$\langle I' (\tilde{u}_{t}+\sigma\phi_{1},\tilde{v}_{t}+\sigma\phi_{2}),(\phi_{1},\phi_{2}) \rangle\leq -\frac{1}{2},\  \ |t-1|,|\sigma|\leq\varepsilon, $$
and introduce a cut-off function $0\leq \zeta\leq1 $ such that $\zeta(t)=1 $ for $|t-1|\leq\frac{\varepsilon}{2}$ and $\zeta(t)=0 $ for $|t-1|\geq\varepsilon. $
For $ t\geq0$, we define
$$\gamma_{1}(t):=\left\{\aligned &\tilde{u}_{t}, &\hbox{if}\ |t-1|\geq\varepsilon,\\
&\tilde{u}_{t}+\varepsilon\zeta(t)\phi_{1}, &\hbox{if}\ |t-1|<\varepsilon,\endaligned\right. $$
$$\gamma_{2}(t):=\left\{\aligned &\tilde{v}_{t}, &\hbox{if}\ |t-1|\geq\varepsilon,\\
&\tilde{v}_{t}+\varepsilon\zeta(t)\phi_{2}, &\hbox{if}\ |t-1|<\varepsilon.\endaligned\right. $$
Note that $ \gamma_{i}(t)$ is a continuous curve in the metric space $ (H,d)$, eventually choosing a smaller $ \varepsilon$, we get that $d_{H}(\gamma_{i}(t),0)>0$  for  $|t-1|<\varepsilon$, $i=1,2$.
\vskip4pt
{\bf Claim: } $\sup\limits_{t\geq0}I(\gamma_{1}(t),\gamma_{2}(t))<m.$

Indeed, if  $|t-1|\geq\varepsilon $, then $I(\gamma_{1}(t),\gamma_{2}(t))=I(\tilde{u}_{t},\tilde{v}_{t})<I(u,v)=m$. If $ |t-1|<\varepsilon$, by using the mean value theorem to the $C^{1}$  map $[0,\varepsilon]\ni \sigma\mapsto I(\tilde{u}_{t}+\sigma\zeta(t)\phi_{1},\tilde{v}_{t}+\sigma\zeta(t)\phi_{2})\in\mathbb R$, we find, for a suitable $\bar{\sigma}\in(0,\varepsilon)$,
$$\aligned &\ I(\tilde{u}_{t}+\sigma\zeta(t)\phi_{1},\tilde{v}_{t}+\sigma\zeta(t)\phi_{2})\\
=&\ I(\tilde{u}_{t},\tilde{v}_{t})+\langle I'(\tilde{u}_{t}+\bar{\sigma}\zeta(t)\phi_{1},\tilde{v}_{t}+\bar{\sigma}\zeta(t)\phi_{2}),(\zeta(t)\phi_{1},\zeta(t)\phi_{2})\rangle\\
\leq&\ I(\tilde{u}_{t},\tilde{v}_{t})-\frac{1}{2}\zeta(t)\\
<&\ m.\endaligned$$
To conclude, we observe that $G(\gamma_{1}(1-\varepsilon),\gamma_{2}(1-\varepsilon))>0$ and $G(\gamma_{1}(1+\varepsilon),\gamma_{2}(1+\varepsilon))<0$.  By the continuity of the map $t\mapsto G(\gamma_{1}(t),\gamma_{2}(t))$  there exists $t_{0}\in(1-\varepsilon,1+\varepsilon) $  such that $G(\gamma_{1}(t_{0}),\gamma_{2}(t_{0}))=0$. Namely,
$$ (\gamma_{1}(t_{0}),\gamma_{2}(t_{0}))=(\tilde{u}_{t_{0}}+\varepsilon\zeta(t_{0})\phi_{1},\tilde{v}_{t_{0}}+\varepsilon\zeta(t_{0})\phi_{2})\in \mathcal{M},$$  and $I(\gamma_{1}(t_{0}),\gamma_{2}(t_{0}))<m$, this is a contradiction.

We already know that the minimizer of $I|_{\mathcal{M}}$  is a solution. Since any solution of (\ref{eq11}) belongs to  $\mathcal{M}$, the minimizer is a ground state. In addition, if $(\tilde{u},\tilde{v})\in \mathcal{M}$ a minimizer for $I|_{\mathcal{M}}$, then, $(|\tilde{u}|,|\tilde{v}|)\in \mathcal{M}$ is also a minimizer, and hence a solution. By the classical maximum principle to each equation of (\ref{eq11}), we get that $\tilde{u},\tilde{v}>0$.
\end{proof}


\begin{thebibliography}{99}
\small

\bibitem{acm} M. Alves, P.  Carri\~{a}o, O.  Miyagaki, {\it Soliton solutions for a class of quasilinear elliptic equations on $\mathbb R$}, Adv. Nonlinear Stud. 7 (2007) 579-598.

\bibitem{bw} T. Bartsch, Z. Wang, {\it Existence and multiplicity results for superlinear elliptic problems on $\mathbb R^{N}$}, Commun. Partial Diff. Eq. 20 (1995) 1725-1741.

\bibitem{0bl} H. Berestycki, P. Lions, {\it Nonlinear Scalar field equations, I: Existence of a ground state}, Arch. Rat. Mech. Anal. 82 (1983) 313-345.

\bibitem{bg} A. Borovskii, A. Galkin, {\it Dynamical modulation of an ultrashort high-intensity laser pulse in matter}, JETP, 77 (1993)  562-573.

\bibitem{bhs} A. Bouard, N. Hayashi, J. Saut, {\it Global existence of small solutions to a relativistic nonlinear Schr\"{o}dinger equation,} Commun. Math. Phys. 189 (1997) 73-105.

\bibitem{bmmlb} H. Brandi, C. Manus, G. Mainfray, T. Lehner, G. Bonnaud, {\it Relativistic and ponderomotive self-focusing of a laser beam in a radially inhomogeneous plasma. I: paraxial approximation}, Physics of Fluids B, 5 (1993) 3539-3550.

\bibitem{1bl} H. Br\'{e}zis, E. Lieb, {\it A relation between pointwise convergence of function and convergence of functional}, Proc. Am.
Math. Soc. 88 (1983) 486-490.

\bibitem{bl} L. Br\"{u}ll, H. Lange, {\it  Solitary waves for quasilinear Schr\"{o}dinger equations}, Expos. Math. 4 (1986)  278-288.

\bibitem{cs} X. Chen, R. Sudan, {\it Necessary and sufficient conditions for self-focusing of short ultraintense laser pulse in underdense plasma}, Phys Rev Lett. 70 (1993) 2082-2085.

\bibitem{fms} M. Furtado, L. Maia, E. Silva, {\it  Solutions for a resonant elliptic system with coupling in $\mathbb R^{N}$}, Commun. Partial Diff. Eq. 27 (2002) 1515-1536.

\bibitem{gt} Y. Guo, Z. Tang, {\it Ground state solutions for quasilinear Schr\"{o}dinger systems}, J. Math. Anal. Appl. 389 (2012) 322-339.

\bibitem{jw} Y. Jiao, Y. Wang, {\it Existence of Solutions for a Modified Nonlinear Schr\"{o}dinger System}, J. Appl. Math. 2013.

\bibitem{l} P.  Lions, {\it  The concentration-compactness principle in the calculus of variations. The locally compact case. Part 1-2}, Ann. Inst. H. Poincar\'{e}. 1 (1984) 109-145 and 223-283.


\bibitem{lw} J. Liu, Z. Wang, {\it Soliton solutions for quasilinear Schr\"{o}dinger equations}, I, Proc. AMS. 131 (2003) 441-448.


\bibitem{lw2} J. Liu, Z. Wang, {\it Soliton solutions for quasilinear Schr\"{o}dinger equations}, II, J. Differential Equations, 187 (2003) 473-493.

\bibitem{lww} J. Liu, Y. Wang, Z. Wang, {\it Solutions for quasilinear Schr\"{o}dinger equations via the Nehari method}, Commun. Partial Diff. Eq. 29 (2004) 879-901.

\bibitem{m} A. Moameni, {\it Existence of soliton solutions for a quasilinear Schr\"{o}dinger equation involving critical exponent in $\mathbb R^{N}$}, J. Differ Equ. 229 (2006) 570-587.

\bibitem{ma} A. Moameni, {\it On a class of periodic quasilinear Schr\"{o}dinger equations involving critical growth in $\mathbb R^{2}$}, J. Math. Anal. Appl. 334 (2007) 775-786.

\bibitem{mj} V. Moroz, J. Schaftingen, {\it Ground states of nonlinear Choquard equations: Existence, qualitative properties and decay asymptotics}, J. Funct. Anal. 265 (2013) 153-184.

\bibitem{psw} M. Poppenberg, K. Schmitt, Z. Wang, {\it On the existence of soliton solutions to quasilinear Schr\"{o}dinger equations}, Calc. Var. and PDE. 14 (2002) 329-344.

\bibitem{rs} D. Ruiz, G. Siciliano, {\it Existence of ground states for a modified nonlinear Schr\"{o}dinger equation}, Nonlinearity, 23 (2010) 1221-1233.

\bibitem{0s} M. Schott, {\it Station\"{a}re L\"{o}sungen quasilinearer Schr\"{o}dinger-Gleichungen}, Diploma Thesis Universit\`{e}t K\"{o}ln. 2002.

\bibitem{s} U. Severo, {\it Symmetric and nonsymmetric solutions for a class of quasilinear Schr\"{o}dinger equations}, Adv. Nonlinear Stud. 8 (2008) 375-389.

\bibitem{su} U. Severo, {\it Existence of weak solutions for quasilinear elliptic equations involving the p-Laplacian}, Electron. J. Differ Equ. 2008 (2008): Paper-No.

\bibitem{ss} U. Severo, E. Silva, {\it On the existence of standing wave solutions for a class of quasilinear Schr\"{o}dinger systems}, J. Math. Anal. Appl.  412 (2014) 763-775.

\bibitem{wz} Y. Wang, W. Zou, {\it Bound states to critical quasilinear Schr\"{o}dinger equations}, NoDEA Nonlinear Differential Equations Appl. 19 (2012) 19-47.

\bibitem{ww} K. Wu, X. Wu, {\it Radial solutions for quasilinear Schr\"{o}dinger equations without 4-superlinear condition}, Appl. Math. Lett.  76  (2018) 53-59.


\end{thebibliography}
\end{document}